\pgfplotsset{compat=1.18}
\title{Defining extended TQFTs via handle attachments}
\author{Benjamin Ha\"ioun }
\date{\today}
\newtheorem{counter}{Counter}
\newtheorem{theorem}[counter]{Theorem}
\newtheorem{lemma}[counter]{Lemma}
\newtheorem{corollary}[counter]{Corollary}
\newtheorem{proposition}[counter]{Proposition}
\theoremstyle{definition}
\newtheorem{definition}[counter]{Definition}
\newtheorem{remark}[counter]{Remark}
\newtheorem{example}[counter]{Example}
\numberwithin{counter}{section}
\numberwithin{equation}{section}
\newcommand\D{\mathbb{D}}
\renewcommand\S{\mathbb{S}}
\newcommand\Rr{\mathbb{R}}
\newcommand\CC{\mathcal{C}}
\newcommand\ZZ{\mathcal{Z}}
\newcommand\inj{\hookrightarrow}
\newcommand{\Cob}{\operatorname{\textsc{\textbf{Cob}}}}
\DeclareMathOperator{\cob}{Cob}
\DeclareMathOperator{\Hom}{Hom}
\DeclareMathOperator{\id}{id}
\DeclareMathOperator{\im}{Im}
\begin{document}

\maketitle
\begin{abstract}
We give a finite presentation of the symmetric monoidal bicategory of (smooth, oriented) closed manifolds, cobordisms and cobordisms with corners as an extension of the bicategory of closed manifolds, cobordisms and diffeomorphisms. The generators are the standard handle attachments, and the relations are handle cancellations and invariance under reversing the orientation of the attaching spheres.

In other words, given a categorified TQFT and 2-morphisms associated to the standard handles satisfying our relations, we construct a once extended TQFT.
\end{abstract}
\tableofcontents

\section{Introduction}
Topological Quantum Field Theories (TQFTs) were introduced by Atiyah and Segal as theories that assign vector spaces (called state spaces in physics) to closed manifolds of a certain ``space" dimension $n$, and linear maps between these vector spaces (called correlation functions in physics) to cobordisms of dimension $n+1$, the ``space-time" dimension. In modern language, we say that an $(n+1)$-TQFT is a symmetric monoidal functor 
$$\ZZ:\cob_{n+1}\to \operatorname{Vect}$$
from the 1-category of cobordisms $\cob_{n+1}$ to vector spaces.

A common strategy in defining such TQFTs is to find a convenient way of presenting $(n+1)$-manifolds, constructing correlation functions from this presentation, and checking that this construction does not depend on the chosen presentation.

For example, the Turaev--Viro \cite{TuraevViroStateSum} and Crane--Yetter \cite{CraneYetter} constructions are based on triangulations of 3 and 4 manifolds. The Witten--Reshetikhin--Turaev \cite{ReshetikhinTuraev, TuraevBook, WittenJonesPol} theories are based on surgery presentations of 3-manifolds. It is argued in \cite{WalkerOnWitten, WalkerNotes} that a handle-attachment approach towards each of these theories is possible and insightful. 

Checking invariance of the construction under a change of presentation is a priori not an easy task. There are some powerful results simplifying it significantly for closed manifolds: any two triangulations are related by Pachner moves, any two surgery presentations are related by Kirby moves, any two handle decompositions are related by handle cancellations and handle slides. However, when one has a fixed parametrization of the boundary, as morphisms in the cobordism category do, one has to be a little bit more careful. The reader may want to take a look at the various kinds of critical points for Morse functions on a manifold with boundary considered in \cite[Section 7]{WalkerOnWitten}.

A very handy result dealing with these subtleties has appeared in \cite{Juhasz} where the author gives a presentation of the cobordism category. Note that this presentation is infinite as is contains all the ways one can attach a handle on an $n$-manifold. Using \cite[Thm. 1.8]{Juhasz}, if one already knows the state spaces of the theory and how they behave under diffeomorphisms, then defining a TQFT amounts to giving linear maps associated to each handle attachment, and checking four explicit relations. 
Non-semisimple version of Turaev--Viro theories \cite{CGPVnc2plus1} and of Crane--Yetter theories \cite{CGHP} have been recently defined using Juh\`asz's results.

Extended theories are theories where one can also cut and glue in the spaces directions, and not only in the time direction. They capture the notion of locality in physics. Once-extended theories are ones where we can cut in one space direction. To simplify notation we will denote the space-time dimension by $n+2$. In some sense, there are now two time directions $s$ and $t$, an $n$-manifold can evolve in the direction of time $s$ through an $(n+1)$-space, and an $(n+1)$-space can evolve in the direction of time $t$ through an $(n+2)$-dimensional space-time. Note that one has to consider $(n+2)$-cobordisms with corners in this picture. In modern language, one say that a once-extended TQFT, or an $(n+1+1)$-TQFT, is a symmetric monoidal 2-functor 
$$\ZZ: \Cob_{n+1+1}\to 2\operatorname{Vect}$$

\subsection{Results}
The main objective of this paper is to give a result analogous to \cite[Thm 1.8]{Juhasz} for once-extended theories. We suppose we are given a TQFT defined only on $n$-manifolds, $(n+1)$-cobordisms and their diffeomorphisms (which assemble into a bicategory $\Cob_{n+1+\varepsilon}$). This is called a categorified $(n+1)$-TQFT, or and $(n+1+\varepsilon)$-TQFT, in the literature, and we want to extend it to $(n+2)$-cobordisms. We show that such an extension is equivalent to a prescription on what to do on handle attachments satisfying some explicit relations. This is formulated precisely in:

\newtheorem*{Thm_ETQFTfromHandle}{Theorem \ref{Thm_ETQFTfromHandle}}
\begin{Thm_ETQFTfromHandle}\it
    Given a categorified TQFT $\ZZ^\varepsilon: \Cob_{n+1+\varepsilon}\to \CC$ with values in some symmetric monoidal bicategory $\CC$, $n\geq 0$, and 2-morphisms $$\left(Z_k: \ZZ^\varepsilon(S^{k-1}\times \D^{n+2-k})\to \ZZ^\varepsilon(\D^k\times S^{n+1-k})\right)_{k\in \{0,\dots,n+2\}}$$ such that \begin{itemize}
        \item[(a)] each pair $Z_k, Z_{k+1},\ k \in \{0,\dots,n+1\}$, satisfy the handle cancellation in the sense of Definition \ref{Def_SatisfyHdlCancel}, and
        \item[(b)] each $Z_k,\ k \in \{1,\dots,n+1\}$, is $\iota$-invariant in the sense of Definition \ref{Def_iotaInvariant}
    \end{itemize}
    there exists a unique $(n+1+1)$-TQFT \begin{equation*}
        \ZZ: \Cob_{n+1+1}\to \CC
    \end{equation*}
    extending $\ZZ^\varepsilon$ such that $\ZZ(H_k)=Z_k$, where $H_k$ is the handle of index $k$ of Definition \ref{Def_handle}.

    Reciprocally, given an $(n+1+1)$-TQFT $\ZZ$, taking $\ZZ^\varepsilon$ to be the restriction of $\ZZ$ to $\Cob_{n+1+\varepsilon}$, the morphisms $Z_k:= \ZZ(H_k)$ satisfy $(a)$ and $(b)$.

    In other words, $\Cob_{n+1+1}$ is the symmetric monoidal bicategory obtained from $\Cob_{n+1+\varepsilon}$ by adjoining 2-morphisms $(H_k)_{k\in \{0,\dots,n+2\}}$ modulo the relations (a) and (b).
\end{Thm_ETQFTfromHandle}

The idea of defining extended TQFTs using handle attachments as above is certainly not new. It appears in Lurie's sketch of proof of the cobordism hypothesis \cite{LurieCob} in the construction of fully extended TQFTs, and in Walker's construction of skein-theoretic TQFTs in \cite{WalkerNotes}. Careful and complete proofs were however missing. 
Note also closely related work-in-progress of David Reutter and Kevin Walker announced in \cite{ReutterSlides, WalkerSlides} where the two authors propose an approach to defining TQFTs one handle at a time, by verifying a certain non-degeneracy condition on each handle attachment, which is reminiscent of our condition (a).

Note that, unlike the presentation of \cite{Juhasz} in the 1-categorical case, our presentation of the cobordism bicategory as a symmetric monoidal extension of $\Cob_{n+1+\varepsilon}$ is finite. This is not too surprising as the $k$-handle of Definition \ref{Def_handle} can only be seen as a cobordism with corners.

Note however that we do not try to give generators and relations for the 1-morphisms of $\Cob_{n+1+1}$, but only for 2-morphisms. A full presentation of $\Cob_{n+1+1}$ (not as an extension of $\Cob_{n+1+\varepsilon}$) is a much greater task. Indeed, in our setting we only need to know when two handle decompositions give diffeomorphic cobordisms, and we do not need to remember this diffeomorphism. If one attempts to decompose 1-morphisms, then indeed one needs to remember this data. In Morse theory terms, one not only needs to consider paths of Morse functions, i.e. Cerf theory, but also paths of paths, and classify all types of singularities that may appear there. This has been done in dimension 2 in \cite{SPPhD} for $\Cob_{0+1+1}$, and will appear in any dimension for $\Cob_{n+1+\varepsilon}$ in \cite{Filippos}. Combining the present paper with \cite{Filippos}, one would expect to obtain a full presentation of $\Cob_{n+1+1}$.

An important application of the general methods we develop herein is to construct a once-extended 4-3-2 skein theory for non-semisimple ribbon tensor categories, i.e. a once-extended version of \cite{CGHP}. This application is the focus of a forthcoming paper.
The constructions of \cite{CGPVnc2plus1, CGHP} sometimes produce partially-defined TQFTs, called non-compact TQFTs, and we also give a non-compact version of Theorem \ref{Thm_ETQFTfromHandle}, simply dropping the last handle $Z_{n+2}$ and the relation involving it.

Finally, we show that an extension $\ZZ$ of a categorified TQFT $\ZZ^\varepsilon$ is completely determined by its value on the 0-handle. This is not too surprising. It already features in examples \cite{WalkerNotes, CGPVnc2plus1, CGHP} and is at the heart of the Reutter--Walker approach mentioned above. It is also expected from the proof of the cobordism hypothesis \cite[Prop. 3.4.19]{LurieCob}.
\newtheorem*{Thm_Classif}{Theorem \ref{Thm_Classif}}
\begin{Thm_Classif}\it
    Let $\ZZ,\ZZ': \Cob_{n+1+1}\to \CC$ (resp. $\Cob_{n+1+1}^{nc}\to \CC$) be (resp. non-compact) once-extended TQFTs which agree on $\Cob_{n+1+\varepsilon}$ and such that $\ZZ(H_0)=\ZZ'(H_0)$. Then $\ZZ=\ZZ'$.
\end{Thm_Classif}
One may interpret this result as saying that the existence of a once-extended TQFT $\ZZ$ extending $\ZZ^\varepsilon$ and with $\ZZ(H_0) = Z_0$ is a property of $Z_0$, and no extra structure. 

This result also helps shed some light on the $\iota$-invariance requirement of Theorem \ref{Thm_ETQFTfromHandle}. We show that the requirements (a) and (b) of Theorem \ref{Thm_ETQFTfromHandle} are equivalent to requiring the cancellation (a) for canceling sphere that intersect both positively and negatively.

\subsection{Outline}
In Section \ref{Sec_CobBicat} we define the different cobordisms bicategories we will need and set the terminology for once-extended, categorified and non-compact TQFTs.

In Section \ref{Sec_MainResult} we define the generators and the relations associated to the cobordism bicategory and state the main result.

In Section \ref{Sec_MorseDatum} we prove a result analogous to \cite[Thm. 1.7]{Juhasz} for cobordisms with corners, Theorem \ref{Thm_JuhaszCorner}, which we will use to construct our once-extended TQFT on Hom categories. The proof is based on Morse and Cerf theory, and follows very standard methods \cite{Milnor_hcobordism, Cerf, GWW13, Juhasz}, adapted to corners. This adaptation is straightforward after introducing the appropriate notion of Morse datum for cobordisms with corners. The author does not know a reference where it is studied though, so a complete account is given.

In Section \ref{Sec_2Fun} we show that the data in Theorem \ref{Thm_ETQFTfromHandle} induces the data needed in Theorem \ref{Thm_JuhaszCorner} and we assemble all the functors thus constructed on Hom categories into a symmetric monoidal 2-functor. This section exhibits the really new behavior of our setting, reducing Juh\'asz infinite presentation to a finite one.

In Section \ref{Sec_Classif} we prove that an extension is determined by its value on the 0-handle.

\subsection*{Acknowledgements} I am grateful to David Jordan and Filippos Sytilidis for enlightening conversations. I also thank the anonymous referee for useful comments, and in particular for making me add many pictures. This research took place at the University of Edinburgh and is funded by the Simons Foundation award 888988 as part of the Simons Collaboration on Global Categorical Symmetry.

\section{The cobordism bicategory}\label{Sec_CobBicat}
In this section we recall the definition of the bicategory of cobordisms equipped with collars. These collars, though there is a contractible space of choices, are essential to explicitly define horizontal composition. We follow \cite{SPPhD, Scheimbauer, StolzTeichner} with some simplifications in our context.

To help the reader appreciate the difference between the 1-category $\cob_{n+2}$ of $(n+1)$-manifolds and $(n+2)$-cobordisms between them, and the bicategory $\Cob_{n+1+1}$ of $n$-manifolds, $(n+1)$-cobordisms and $(n+2)$-cobordisms with corner, we will always use bold $\Cob$ for cobordism bicategories, and plain $\cob$ for cobordism 1-categories.

\begin{definition}\label{Def_CobCorner}
    Let $\Sigma_-,\ \Sigma_+$ be closed oriented smooth $n$-manifolds. The category $\cob_{n+2}^{\Sigma_-,\Sigma_+}$ has 
    \begin{description}
        \item[objects:] oriented smooth $(n+1)$-manifolds $M$ with boundary $\partial M \simeq \overline{\Sigma_-}\sqcup \Sigma_+$ equipped with a \textbf{side collar} $\Sigma_- \times [-1,-\frac{1}{2}) \sqcup \Sigma_+ \times (\frac{1}{2},1] \inj M$ of its boundary. By abuse, we will usually abbreviate this as $\Sigma_\pm \times [\pm1,\pm\frac{1}{2}) \inj M$. We require that these collars are regular in the sense that they can be extended to collars $\Sigma_- \times [-1,0) \sqcup \Sigma_+ \times (0,1] \inj M$. We do not choose such an extension, but note that $\Sigma_\pm \times [\pm1,\pm\frac{1}{2}] \inj M$ is uniquely defined by continuity.
        \item[morphisms:] Cobordisms with collared corners, i.e. oriented smooth $(n+2)$-manifolds with corners $W$ equipped with
        \begin{itemize}
            \item a decomposition of their boundary $\partial W = \partial_0W \cup \partial_1W$ such that $\partial_0W \cap \partial_1W$ is the set of corner points of $W$\footnote{This is also called a structure of $\langle 2\rangle$-manifold.},
            \item an identification of the \textbf{side boundary} $\partial_0W\simeq (\overline{\Sigma_-}\sqcup \Sigma_+)\times [-1,1]$ together with a regular collar $\Sigma_\pm \times [\pm1,\pm\frac{1}{2}) \times [-1,1] \inj W$,
            \item a \textbf{source and target diffeomorphism} $\partial_1W \simeq \overline{M}\sqcup M'$ respecting the side collars. 
        \end{itemize} 
        See Figure \ref{fig:cobwithcorners}.\\
        These are considered up to diffeomorphisms $\Phi:W\to W'$ preserving the source and target diffeomorphisms and preserving the side collars up to reparametrization of the $[-1,1]$-coordinate. More precisely, we ask that there exists a reparametrization $\varphi:[-1,1]\tilde\to[-1,1]$ which is the identity near $\{-1,1\}$ such that a point $(x,t,s) \in \Sigma_\pm \times [\pm1,\pm\frac{1}{2}) \times [-1,1] \inj W$ is mapped by $\Phi$ to $(x,\varphi(t),s)\in \Sigma_\pm \times [\pm1,\pm\frac{1}{2}) \times [-1,1] \inj W'$. 
        
        \item[composition $M_1\overset{W}{\to}M_2\overset{W'}{\to}M_3$:] is given by gluing along $M_2$, i.e. $W'\circ W := W \cup_{M_2} W'$. It is equipped with a smooth structure by choosing collars of $M_2$ in both $W$ and $W'$ compatible with the side collars. This smooth structure is well-defined up to diffeomorphism of $W \cup_{M_2} W'$ preserving the side collars and the source and target diffeomorphisms \cite[Thm. 1.4]{Milnor_hcobordism} or \cite[Lemma 6.1]{MunkresDiffTop}. The side collars of the composition are given by gluing the side collars of $W$ and $W'$ and reparametrizing $[-1,1]\underset{1=-1}{\cup}[-1,1]\simeq [-2,2] \Tilde{\to} [-1,1]$ by a diffeomorphism $r$ which is a translation near the boundary. This is readily checked to be associative up to the notion of diffeomorphism introduced above.\footnote{The composition $[-1,1]\underset{1=-1}{\cup}[-1,1]\Tilde{\to} [-1,1]$ will never be associative on the nose, and this is why we needed to allow diffeomorphisms of $W$ that preserve the side collars only up to reparametrization.}
    \end{description}
    \begin{figure}
        \centering
\begin{tikzpicture}[xscale = 1.5]
\draw[very thick] (-0.9,0.5) -- (-0.4,0.5);
\draw[very thick] (0.6,0.5) -- (1.1,0.5);
\draw (-0.5,0) arc(-90:0:0.3 and 0.25) arc(0:90:0.2 and 0.25);
\draw (0.5,0) arc(-90:-180:0.2 and 0.25) arc(180:90:0.3 and 0.25);
\draw (-1,2) --++(2,0);
\draw (-0.9,2.5) --++(2,0);
\draw (-0.9,0.5)  -- ++(0,2);
\fill[gray!30, opacity = 0.5] (-1,0) rectangle ++(0.5,2);
\fill[gray!30, opacity = 0.5] (-0.9,0.5) rectangle ++(0.5,2);
\fill[gray!30, opacity = 0.5] (0.5,0) rectangle ++(0.5,2);
\fill[gray!30, opacity = 0.5] (0.6,0.5) rectangle ++(0.5,2);
\draw (-1,0) -- ++(0,2);
\draw (1,0) -- ++(0,2);
\draw (1.1,0.5) -- ++(0,2);
\draw[very thick] (-1,0) -- ++(0.5,0);
\draw[very thick] (0.5,0) -- (1,0);
\draw[very thick] (-1,2) -- (-0.5,2);
\draw[very thick] (-0.9,2.5) -- (-0.4,2.5);
\draw[very thick] (0.5,2) -- (1,2);
\draw[very thick] (0.6,2.5) -- (1.1,2.5);
\draw (-0.2,0.25) arc(180:0:0.25 and 1);
\node at (0,1.5){$W$};
\begin{scope}[yshift = 1.5cm]
\draw[very thick] (-1,2) -- (-0.5,2);
\draw[very thick] (-0.9,2.5) -- (-0.4,2.5);
\draw[very thick] (0.5,2) -- (1,2);
\draw[very thick] (0.6,2.5) -- (1.1,2.5);
\draw (-1,2) --++(2,0);
\draw (-0.9,2.5) --++(2,0);
\node at (0,2.9){$M_+$};
\end{scope}
\begin{scope}[yshift = -1.5cm]
\draw[very thick] (-1,0) -- (-0.5,0);
\draw[very thick] (-0.9,0.5) -- (-0.4,0.5);
\draw[very thick] (0.5,0) -- (1,0);
\draw[very thick] (0.6,0.5) -- (1.1,0.5);
\draw (-0.5,0) arc(-90:0:0.3 and 0.25) arc(0:90:0.2 and 0.25);
\draw (0.5,0) arc(-90:-180:0.2 and 0.25) arc(180:90:0.3 and 0.25);
\node at (0,-0.2){$M_-$};
\end{scope}
\begin{scope}[xshift = 1.5cm]
\fill[gray!30, opacity = 0.5] (0.5,0) rectangle ++(0.5,2);
\fill[gray!30, opacity = 0.5] (0.6,0.5) rectangle ++(0.5,2);
\draw (1,0) -- ++(0,2);
\draw (1.1,0.5) -- ++(0,2);
\draw[very thick] (0.5,0) -- (1,0);
\draw[very thick] (0.6,0.5) -- (1.1,0.5);
\draw[very thick] (0.5,2) -- (1,2);
\draw[very thick] (0.6,2.5) -- (1.1,2.5);
\node[anchor = west, xscale = 0.8] at (1.2,1){\small $\Sigma_+ \times (\frac{1}{2},1] \times [-1,1]$};
\end{scope}
\begin{scope}[xshift = -1.5cm]
\fill[gray!30, opacity = 0.5] (-1,0) rectangle ++(0.5,2);
\fill[gray!30, opacity = 0.5] (-0.9,0.5) rectangle ++(0.5,2);
\draw (-1,0) -- ++(0,2);
\draw (-0.9,0.5)  -- ++(0,2);
\draw[very thick] (-1,0) -- (-0.5,0);
\draw[very thick] (-0.9,0.5) -- (-0.4,0.5);
\draw[very thick] (-1,2) -- (-0.5,2);
\draw[very thick] (-0.9,2.5) -- (-0.4,2.5);
\node[anchor = east, xscale = 0.8] at (-1.2,1){\small $\Sigma_- \times [-1,\frac{1}{2}) \times [-1,1]$};
\end{scope}
\begin{scope}[xshift = -1.5cm, yshift = -1.5cm]
\draw[very thick] (-1,0) -- (-0.5,0);
\draw[very thick] (-0.9,0.5) -- (-0.4,0.5);
\node[anchor = east] at (-1.2,0.25){\small $\Sigma_- \times [-1,\frac{1}{2})$};
\end{scope}
\begin{scope}[xshift = 1.5cm, yshift = -1.5cm]
\draw[very thick] (0.5,0) -- (1,0);
\draw[very thick] (0.6,0.5) -- (1.1,0.5);
\node[anchor = west] at (1.2,0.25){\small $\Sigma_+ \times (\frac{1}{2},1]$};
\end{scope}
\begin{scope}[xshift = -1.5cm, yshift = 1.5cm]
\draw[very thick] (-1,2) -- (-0.5,2);
\draw[very thick] (-0.9,2.5) -- (-0.4,2.5);
\node[anchor = east] at (-1.2,2.25){\small $\Sigma_- \times [-1,\frac{1}{2})$};
\end{scope}
\begin{scope}[xshift = 1.5cm, yshift = 1.5cm]
\draw[very thick] (0.5,2) -- (1,2);
\draw[very thick] (0.6,2.5) -- (1.1,2.5);
\node[anchor = west] at (1.2,2.25){\small $\Sigma_+ \times (\frac{1}{2},1]$};
\end{scope}
\node at (-1.5,3.75) {$\inj$};
\node[xscale = -1] at (1.5,3.75) {$\inj$};
\node at (-1.5,-1.25) {$\inj$};
\node[xscale = -1] at (1.5,-1.25) {$\inj$};
\node at (-1.5,1.25) {$\inj$};
\node[xscale = -1] at (1.5,1.25) {$\inj$};
\node[rotate = -90] at (-2.15,3) {$\inj$};
\node[rotate = -90] at (2.15,3) {$\inj$};
\node[rotate = -90] at (0,3) {$\inj$};
\node[rotate = 90] at (-2.15,-0.5) {$\inj$};
\node[rotate = 90] at (2.15,-0.5) {$\inj$};
\node[rotate = 90] at (0,-0.5) {$\inj$};
\end{tikzpicture}        
\caption{A cobordism with collared corners.}
        \label{fig:cobwithcorners}
    \end{figure}
    An orientation-preserving diffeomorphism $f:M\to M'$ which preserves side collars induces a morphism $W_f:=M'\times [-1,1]$ with side collars induced by the ones in $M'$, source diffeomorphism $f$ and target diffeomorphism the identity. 

    The identity on $M$ is the morphism $W_{\id}$.
\end{definition}

\begin{definition}
    The symmetric monoidal cobordism bicategory $\Cob_{n+1+1}$ has:
    \begin{description}
        \item[Objects: ]  Closed oriented smooth $n$-manifolds $\Sigma$
        \item[Hom category $\Sigma_- \to \Sigma_+$: ] the category $\cob_{n+2}^{\Sigma_-,\Sigma_+}$ defined above.

        A diffeomorphism $f:\Sigma_-\to\Sigma_+$ induces a 1-morphism $M_f := \Sigma_+\times [-1,1]$ with side collar $(f\times\id)\sqcup \id: \Sigma_- \times [-1,-\frac{1}{2}) \sqcup \Sigma_+ \times (\frac{1}{2},1] \inj M_f$.

        \item[Identity 1-morphism on $\Sigma$:] $\id_\Sigma := M_{\id}$ induced by the identity diffeomorphism.
        
        \item[Composition of 1-morphisms $\Sigma_1 \overset{M}{\to}\Sigma_2\overset{M'}{\to}\Sigma_3$:] is given by gluing of the collars
        \begin{equation*}
            M'\circ M := M \underset{\Sigma_2\times I}{\cup}M'
        \end{equation*} where $I=[0,1]$ is canonically identified to $[-1,-\frac{1}{2}]$ or $[\frac{1}{2},1]$, with side collars for $\Sigma_1$ and $\Sigma_3$ inherited from those of $M$ and $M'$.

        \item[Horizontal composition of 2-morphisms $M_1 \overset{W}{\longrightarrow}M_2$ and $M_1'\overset{W'}{\longrightarrow}M_2'$:] where $M_1,M_2:\Sigma_1\to\Sigma_2$ and $M_1',M_2':\Sigma_2\to\Sigma_3$ is given by gluing of the collars 
        \begin{equation*}
            W'\circ_h W := W \underset{\Sigma_2\times I \times [-1,1]}{\cup}W'
        \end{equation*} 
        where $I$ is canonically identified to $[-1,-\frac{1}{2}]$ or $[\frac{1}{2},1]$, with side collars and source and target diffeomorphisms inherited from those of $W$ and $W'$.
        

        \item[Monoidal structure:] disjoint union on objects, 1- and 2-morphisms with projections and source and target diffeomorphisms induced by the universal property of coproducts. The unit is $\emptyset$.

        \item[Associators for $\Sigma_1 \overset{M_{12}}{\to}\Sigma_2\overset{M_{23}}{\to}\Sigma_3\overset{M_{34}}{\to}\Sigma_4$:] there is a diffeomorphism
        \begin{equation*}
            \left(M_{12} \underset{\Sigma_2\times I}{\cup}M_{23}\right)\underset{\Sigma_3\times I}{\cup}M_{34} \simeq M_{12} \underset{\Sigma_2\times I}{\cup}\left(M_{23}\underset{\Sigma_3\times I}{\cup}M_{34}\right)
        \end{equation*} induced by the universal property of colimits\footnote{The gluing operation defining composition is a colimit in topological spaces.} which induces a 2-morphism. 

        \item[Unitors for $\Sigma\overset{\id_\Sigma}{\to} \Sigma\overset{M}{\to}\Sigma'$:] Remember that we required that the side collars are regular in the sense that they can be extended to collars $\Sigma_- \times [-1,0) \sqcup \Sigma_+ \times (0,1] \overset{\varphi}{\inj} M$. This extension is unique up to isotopy.
        
        The composition $M\circ \id_\Sigma = \Sigma\times [-1,1]\underset{\Sigma\times I}{\cup} M$ contains
        \begin{equation*}
        \psi:         \Sigma \times [-\frac{5}{2},0) \simeq \Sigma \times \left( [-1,1] \underset{[\frac{1}{2},1]=[-1,-\frac{1}{2}]}{\cup}[-1,0)\right) \overset{\id \cup \varphi}{\inj} M\circ \id_\Sigma
        \end{equation*}
        whose complement is $M\smallsetminus \im(\varphi)$.
        
        Choose a (unique-up-to-isotopy) diffeomorphism $\rho: [-\frac{5}{2},0) \simeq [-1,0)$ which is a translation on a neighborhood on $[-\frac{5}{2}, -2+\delta]\sqcup [-\delta,0)\subseteq [-\frac{5}{2},0)$ for some $\delta>0$.
        
        We can now define a diffeomorphism:
        \begin{equation*}\begin{array}{rcl}
           r:  M\circ \id_\Sigma&\to& M\\
            x&\mapsto& \left\{ \begin{array}{ll}
            x & \text{if   } x\in M\smallsetminus \im(\varphi)\\
            \varphi(p,\rho(t)) & \text{if   } x=\psi(p,t) \in \im(\psi)
            \end{array}   \right.
        \end{array}\end{equation*}
 preserving side collars.
 
        The right unitor for $M$ is the 2-morphism induced by this diffeomorphism. It does not depend on the choice of $\varphi$ and $\rho$.
        Left unitors are defined similarly.

        More generally, given a diffeomorphism $\Sigma_-\overset{f}{\to}\Sigma_-'$ and a 1-morphism $M:\Sigma_-\to\Sigma_+$, this construction gives a diffeomorphism $M\circ M_f \simeq {}^fM$ preserving source and target diffeomorphisms, where ${}^fM$ is $M$ as a manifold but with side collar twisted by $f$.

        \item[Interchanger for the monoidal structure:] Given $\Sigma_1 \overset{M_{12}}{\to}\Sigma_2\overset{M_{23}}{\to}\Sigma_3$ and $\Sigma_1' \overset{M_{12}'}{\to}\Sigma_2'\overset{M_{23}'}{\to}\Sigma_3'$, we have a canonical diffeomorphism
        \begin{equation*}
           \phi: (M_{23}\circ M_{12}) \sqcup (M_{23}'\circ M_{12}') \simeq (M_{23} \sqcup M_{23}') \circ (M_{12} \circ M_{12}')
        \end{equation*} 
        given by the universal property of colimits, which induces a 2-morphism.
        
        \item[Associators and unitors for the monoidal structure:] are induced by the universal property of colimits. 

        \item[Symmetric braiding:] is induced by the flip of components $\Sigma\sqcup \Sigma'\simeq\Sigma'\sqcup\Sigma$ and $M\sqcup M'\simeq M'\sqcup M$. This is also the map induced by the universal property of coproducts. All the coherence modifications are between compositions of cobordisms induced by diffeomorphims where the composition of the diffeomorphisms agree on the nose. They are therefore induced by diffeomorphisms similar to the unitors above.
    \end{description}
\end{definition}
\begin{definition}
    The symmetric monoidal bicategory $\Cob_{n+1+1}^{nc}$ of non-compact cobordisms is the sub-bicategory of $\Cob_{n+1+1}$ with the same objects and 1-morphisms but only those 2-morphisms where the target diffeomorphism is surjective on connected components, i.e. every connected component of the $(n+2)$-cobordisms have non-empty outgoing boundary.
\end{definition}
\begin{definition}
    The symmetric monoidal bicategory $\Cob_{n+1+\varepsilon}$ has the same objects as $\Cob_{n+1+1}$, but 2-morphisms are isotopy classes of diffeomorphisms preserving the side collars. All the coherence data is the same as above, simply not taking cobordisms induced by the diffeomorphisms. We will denote $\cob_{(n+1)+\varepsilon}^{\Sigma_-,\Sigma_+} = \Hom_{\Cob_{n+1+\varepsilon}}(\Sigma_-,\Sigma_+)$ the category of collared $(n+1)$-cobordisms with prescribed boundary $\Sigma_-,\Sigma_+$ and diffeomorphisms preserving collars.

    It comes with an evident strictly symmetric monoidal strict 2-functor $\Cob_{n+1+\varepsilon}\to \Cob_{n+1+1}$ which is the identity on objects and 1-morphisms and maps a diffeomorphism $d$ to the induced cobordism $W_d$. This operation is well-defined as an isotopy $d\sim d'$ as above induces a diffeomorphism $W_d \simeq W_{d'}$ which preserves side collars and source and target diffeomorphisms. Note that this 2-functor factors through $\Cob_{n+1+1}^{nc}$.
\end{definition}
\begin{remark}
    The homotopy 1-category $h_1(\Cob_{n+1+\varepsilon})$ is equivalent to the usual category of cobordisms $\cob_{n+1}$ by mapping a collared cobordism $(M, \varphi: \Sigma_\pm \times [\pm1,\pm\frac{1}{2}) \inj M)$ to the cobordism $M\smallsetminus \varphi([\pm1, \pm\frac{3}{4}))$. Indeed, gluing of collars $M'\circ M := M \underset{\Sigma\times I}{\cup}M'$ identifies $\Sigma \times [\frac{1}{2},1] \inj M$ to $\Sigma \times [-1,-\frac{1}{2}] \inj M'$ and in particular glues $\Sigma \times\{\frac{3}{4}\}$ in $M$ to $\Sigma \times\{-\frac{3}{4}\}$ in $M'$.
\end{remark}
\begin{remark}
    One can also define a (strict) bicategory $\Cob_{n+\varepsilon+\varepsilon}$ with same objects, 1-morphisms being diffeomorphisms and 2-morphisms isotopies of diffeomorphisms considered up to higher isotopy. There is still a 2-functor $\Cob_{n+\varepsilon+\varepsilon} \to \Cob_{n+1+\varepsilon}\to\Cob_{n+1+1}$ but it is not strict and preserves composition only up to an isomorphisms similar to the unitors above.
\end{remark}
\begin{definition}
    Let $\CC$ be a symmetric monoidal bicategory. \\
    A \textbf{once-extended $(n+2)$-TQFT}, or an \textbf{$(n+1+1)$-TQFT}, with values in $\CC$ is a symmetric monoidal functor 
    \begin{equation*}
        \ZZ: \Cob_{n+1+1}\to \CC\ .
    \end{equation*}
    A \textbf{non-compact once-extended $(n+2)$-TQFT} is a symmetric monoidal functor 
    \begin{equation*}
        \ZZ: \Cob_{n+1+1}^{nc}\to \CC\ .
    \end{equation*}
    A \textbf{categorified $(n+1)$-TQFT}, or an \textbf{$(n+1+\varepsilon)$-TQFT}, is a symmetric monoidal functor 
    \begin{equation*}
        \ZZ: \Cob_{n+1+\varepsilon}\to \CC\ .
    \end{equation*}
    Given a possibly non-compact extended TQFT $\ZZ$ and a categorified TQFT $\ZZ^\varepsilon$, we say that \textbf{$\ZZ$ extends $\ZZ^\varepsilon$}, or that \textbf{$\ZZ^\varepsilon$ is the restriction of $\ZZ$ to $\Cob_{n+1+\varepsilon}$}, if $\ZZ^\varepsilon$ is equal to the composition $\Cob_{n+1+\varepsilon}\to \Cob_{n+1+1}^{nc} \overset{\ZZ}{\to} \CC$. 
\end{definition}
The main result of this paper is to show one can construct an $(n+1+1)$-TQFT from the data of an $(n+1+\varepsilon)$-TQFT together with coherent values on elementary handle attachments.

\section{Handle attachments and main result}\label{Sec_MainResult}
\subsection{Handle attachments}
We first introduce the standard $(n+2)$-dimensional $k$-handles. Note that the usual definition $\D^k\times \D^{n+2-k}: S^{k-1}\times \D^{n+2-k}\to \D^k\times S^{n+1-k}$ is not a valid 2-morphism in our definition of the cobordism bicategory. Indeed, it doesn't have a side boundary $S^{k-1}\times S^{n+1-k}\times [-1,1]$, but rather this side boundary has been pinched to a corner $S^{k-1}\times S^{n+1-k}$.

The following description of handles with non-pinched corners is very classical and is very natural from the point of view of Morse functions. Indeed, we are taking simply a standard Morse function $f$ and seeing the ball as a cobordism from the level set $f^{-1}(-1)$ to the level set $f^{-1}(1)$. However, this description is usually hidden in proofs, see e.g. \cite[Thm 3.12]{Milnor_hcobordism}, and not taken as the definition of the handle attachment itself, because it is much more involved to define than the pinched version.
\begin{definition}\label{Def_handle}
Let $k \in \{0,\dots,n+2\}$. Let $\D^k$ denote the standard closed $k$-dimensional disk of radius $1$, $S^{k-1}_r$ the $k$-dimensional sphere of radius $r$ and $S^{k-1}:=S^{k-1}_1$.

The $(n+1)$-manifolds $S^{k-1}\times \D^{n+2-k}$ and $\D^k\times S^{n+1-k}$ can both be seen as 1-morphism from $S^{k-1}\times S^{n+1-k}$ to the empty $n$-manifold. The side collar are given by identifying $S^{n+1-k}_r \subseteq \D^{n+2-k}$ with $S^{n+1-k}$ by rescaling for $-r \in [-1,-\frac{1}{2})$ (respectively identifying $S^{k-1}_r \subseteq \D^{k}$ with $S^{k-1}$ for $-r \in [-1,-\frac{1}{2})$).

The \textbf{$(n+2)$-dimensional index $k$ handle} is the 2-morphism $$H_k : S^{k-1}\times \D^{n+2-k}\to \D^k\times S^{n+1-k}$$ with
\begin{equation*}
    H_k := \left\{(x,y)\in\Rr^k\times\Rr^{n+2-k} \Big\vert\  -1\leq ||y||^2-||x||^2\leq 1 \text{  and  }  ||x||^2||y||^2 \leq 2\right\}
\end{equation*}
The source and target diffeomorphisms are respectively 
$$\begin{array}{rcl}
    S^{k-1}\times \D^{n+2-k} &\to& H_k\\
    (x,y) &\mapsto& (\sqrt{||y||^2+1}\, x,y) \text{  ,  and}
\end{array}$$
$$\begin{array}{rcl}
    \D^{k}\times S^{n+1-k} &\to& H_k\\
    (x,y) &\mapsto& (x,\sqrt{||x||^2+1}\, y)\ .
\end{array}$$
An explicit formula for the side collar is a bit tedious. Instead we notice that the manifold $H_k$ comes equipped with a Morse function $f: H_k \to \Rr,\ f(x,y) = ||y||^2-||x||^2$.

We denote $v:= \operatorname{grad} f$ its gradient vector field in the given coordinate system. For $(x,y)\in H_k$, we denote $\phi_t(x,y)\in H_k$ the point obtained by flowing along  $\frac{v}{v(f)}$ for a time $t$, when it is defined. We set:
$$\begin{array}{rcl}
    S^{k-1}\times S^{n+1-k}\times [-1,1]\times (\frac{1}{2},1] &\to& H_k\\
    (x,y,t,s) &\mapsto& \phi_{t+1}(\sqrt{s^2+1}\ x, sy)
\end{array}$$
This definition indeed gives a collar of the side boundary of $H_k$ as the function $g(x,y) = ||x||^2||y||^2$ is constant on flow lines of $f$. 

This example will guide the definition of a Morse function for a cobordism with corners in Definition \ref{Def_gradientlike}.
\end{definition}
\begin{figure}
    \centering
        \includegraphics[width=0.5\linewidth]{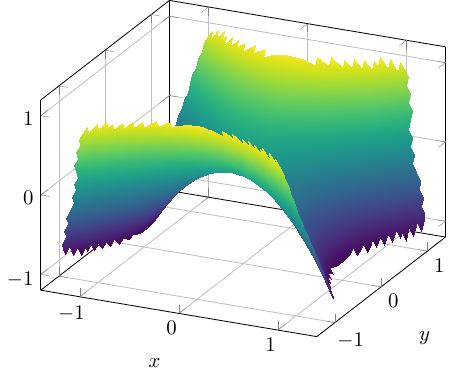}
\caption{A Morse saddle $H_1$ in dimension 2 (so n=0). The vertical (or side) boundary is made of gradient lines for $f(x,y) = y^2-x^2$. The top and bottom boundaries are made of level sets for $f$.}
    \label{fig:MorseSaddle}
\end{figure}
\begin{definition}\label{Def_hdlAttachment}
    Let $M: \Sigma_-\to \Sigma_+$ be a 1-morphism. A \textbf{framed $(k-1)$-sphere in $M$} is an embedding 
    \begin{equation*}
        \S: S^{k-1}\times \D^{n+2-k} \inj M
    \end{equation*}
    disjoint from the side collars. It induces a decomposition \begin{equation*}
        M \simeq (\id_{\Sigma_+}\sqcup\  S^{k-1}\times \D^{n+2-k}) \circ M_{\smallsetminus \S}
    \end{equation*}
    where $M_{\smallsetminus \S} := M\smallsetminus \S(S^{k-1}\times \D_{\frac{1}{2}}^{n+2-k}) : \Sigma_- \to \Sigma_+\sqcup S^{k-1}\times S^{n+1-k}$ with collars induced from those of $M$ and from $\S$.

The \textbf{manifold obtained from surgery on $M$ along $\S$} is the 1-morphism \begin{equation*}
    M(\S) := (\id_{\Sigma_+}\sqcup\ \D^k \times S^{n+1-k}) \circ M_{\smallsetminus \S} : \Sigma_- \to \Sigma_+\ .
\end{equation*} 

    We denote $a(\S) := \S(S^{k-1}\times \{0\}) \subseteq M$ and $b(\S) := \{0\}\times S^{n+1-k}\subseteq M(\S)$ called respectively the unframed attaching and belt spheres of the $k$-handle.

The \textbf{handle attachment on $M$ along $\S$} is the 2-morphism \begin{equation*}
    W(\S) := (\id \sqcup H_k) \circ_h \id_{M_{\smallsetminus \S}} : M \to M(\S)\ .
\end{equation*}
It is represented graphically in Figure \ref{fig:attachmentWofS}.
\end{definition}
\begin{figure}
    \centering
\begin{tikzpicture}[xscale = 2, yscale = 1.2]
\fill[gray!30, opacity = 0.5] (-0.8,0) rectangle ++(0.3,1.5);
\fill[gray!30, opacity = 0.5] (-0.7,0.5) rectangle ++(0.3,1.5);
\draw[very thick] (-0.8,0) --++(0,1.5);
\draw[very thick] (-0.7,0.5) --++(0,1.5);
\draw[very thick] (-0.8,0) -- (-0.5,0);
\draw[very thick] (-0.7,0.5) -- (-0.4,0.5);
\draw (-0.5,0) arc(-90:0:0.6 and 0.25) arc(0:90:0.5 and 0.25);
\draw (1,0) arc(-90:-180:0.5 and 0.25) arc(180:90:0.6 and 0.25)arc(90:0:0.5 and 0.25) arc(0:-90:0.6 and 0.25);
\node at (0,-0.2){$M$};
\begin{scope}[yshift = 1.5cm]
\draw[very thick] (-0.8,0) -- (-0.5,0);
\draw[very thick] (-0.7,0.5) -- (-0.4,0.5);
\draw (-0.5,0) to[out= 0, in=180] (0.25,0.1)to[out= 0, in=180] (1,0);
\draw (-0.4,0.5) to[out= 0, in=180] (0.35,0.4)to[out= 0, in=180] (1.1,0.5)arc(90:0:0.5 and 0.25) arc(0:-90:0.6 and 0.25);
\node at (0,0.7){$M(\S)$};  
\end{scope}
\fill[blue!40, opacity = 0.35] (-0.3,0.02) arc(-70:0:0.6 and 0.25) arc (180:0:0.2 and 0.75) arc(-180:-115:0.5 and 0.25)--++(0,1.5)  to[out= 180, in=0] (0.25,1.62) to[out= 180, in=0] (-0.3,1.52) -- cycle;
\fill[blue!40, opacity = 0.25] (-0.2,0.48) -- (-0.2,1.98) to[out= 0, in=180] (0.35,1.9)to[out= 0, in=180] (0.9,1.98) -- (0.9, 0.48) arc(110:180:0.6 and 0.25) arc (0:180:0.2 and 0.75) arc(0:70:0.5 and 0.25);
\draw[blue, very thick] (-2.1,0) --++ (0.5,0);
\draw[blue, very thick] (-2.1,0.5) --++ (0.5,0);
\node[scale = 1.4] at (-1.3, 0.35) {$\overset{\S}{\inj}$};
\draw[blue, very thick] (-0.3,0.02) arc(-70:0:0.6 and 0.25) arc(0:70:0.5 and 0.25);
\draw[blue, very thick] (0.8,0.02) arc(-115:-180:0.5 and 0.25) arc(180:110:0.6 and 0.25);
\draw[blue] (-0.3,1.52) to[out= 0, in=180] (0.25,1.62)to[out= 0, in=180] (0.8,1.52);
\draw[blue] (-0.2,1.98) to[out= 0, in=180] (0.35,1.9)to[out= 0, in=180] (0.9,1.98);
\draw[blue] (-0.3,0.02) --++(0,1.5);
\draw[blue] (0.8,0.02) --++(0,1.5);
\draw[blue] (-0.2,0.48) --++(0,1.5);
\draw[blue] (0.9,0.48) --++(0,1.5);
\draw[blue] (0.1,0.25) arc (180:0:0.2 and 0.75) node[midway, above]{$H_1$};
\draw (1.6,0.25) -- ++(0,1.5) node[midway, left]{$\id_{M\smallsetminus \S}$};
\end{tikzpicture}    \caption{The handle attachment along an embedded framed sphere for $n+2=2$ and $k=1$.}
    \label{fig:attachmentWofS}
\end{figure}

The above description of any handle attachment as a composition in the cobordism bicategory involving the standard generator $H_k$ makes the following algebraic definition possible.
\begin{definition}\label{Def_Z(S)}
    Let $\ZZ^\varepsilon: \Cob_{n+1+\varepsilon}\to \CC$ be a categorified TQFT and choose a 2-morphism $Z_k: \ZZ^\varepsilon(S^{k-1}\times \D^{n+2-k})\to \ZZ^\varepsilon(\D^k\times S^{n+1-k})$ in $\CC$, which we think of as the candidate assignment to the $k$-handle $H_k$.
    Given a framed $(k-1)$-sphere $\S \subseteq M$, we denote \begin{equation*}
        Z(\S) := (\id \otimes Z_k) \circ_h \id_{\ZZ^\varepsilon(M_{\smallsetminus \S})} : \ZZ^\varepsilon(M) \to \ZZ^\varepsilon(M(\S))
    \end{equation*} with the notations above, where we have left implicit the canonical isomorphisms $$\ZZ^\varepsilon(M) \simeq (\id_{\ZZ^\varepsilon(\Sigma_+)}\otimes  \ZZ^\varepsilon(S^{k-1}\times \D^{n+2-k})) \circ_h \ZZ^\varepsilon(M_{\smallsetminus \S})$$ and 
    $$(\id_{\ZZ^\varepsilon(\Sigma_+)}\otimes  \ZZ^\varepsilon(\D^{k}\times S^{n+1-k})) \circ_h \ZZ^\varepsilon(M_{\smallsetminus \S})\simeq \ZZ^\varepsilon(M(\S))\ .$$    
\end{definition}

\subsection{Handle cancellation}
\begin{definition}\label{Def_TheHdlCancel}
Let $B$ be a smoothing\footnote{One may embed the following manifold into $\D^{n+1}$ and take a neighborhood of its image. We can equip $B$ with a collar of its boundary that is disjoint from the image. Note that $B$ is diffeomorphic to an $(n+1)$-ball.} of $S^{k-1}\times \D^{n+2-k} \underset{S^{k-1}\times \D^{n+1-k}}{\cup} \D^k\times \D^{n+1-k}$ where $\D^{n+1-k} \inj \partial \D^{n+2-k}$ is identified with a hemisphere. It comes equipped with a framed $(k-1)$-sphere $\S_B: S^{k-1}\times \D^{n+2-k} \subseteq B$.

Now, $B(\S)$ is a smoothing of $\D^k\times S^{n+1-k} \underset{S^{k-1}\times \D^{n+1-k}}{\cup} \D^k\times \D^{n+1-k}$. We can find a framed $k$-sphere $\S_B'$ in $B(\S)$ via $\S_B': S^{k}\times \D^{n+1-k} \simeq \D^k\times \D^{n+1-k} \underset{S^{k-1}\times \D^{n+1-k}}{\cup} \D^k\times \D^{n+1-k} \subseteq B(\S)$.

It follows from \cite[Def. 2.17]{Juhasz} that $W(\S_B')\circ W(\S_B) \simeq W_{\varphi_B}$ for some diffeomorphism $$\varphi_B: B \to B(\S)(\S')$$ well defined up to isotopy, and supported in $S^{k-1}\times \D^{n+2-k} \underset{S^{k-1}\times \D^{n+1-k}}{\cup} \D^k\times \D^{n+1-k}$, hence preserving collars.
\end{definition}

\begin{definition}\label{Def_SatisfyHdlCancel}
    A pair of 2-morphisms
    $$Z_k: \ZZ^\varepsilon(S^{k-1}\times \D^{n+2-k})\to \ZZ^\varepsilon(\D^k\times S^{n+1-k})\ , \quad Z_{k+1}: \ZZ^\varepsilon(S^{k}\times \D^{n+1-k})\to \ZZ^\varepsilon(\D^{k+1}\times S^{n-k})$$  
    in $\CC$ is said to \textbf{satisfy the handle cancellation} if 
    \begin{equation*}
        Z(\S_B')\circ Z(\S_B) = \ZZ^\varepsilon(W_{\varphi_B})\ .
    \end{equation*} 
\end{definition}

\subsection{$\iota$-invariance}
Handle attachments have an inherent symmetry induced by reversing the attaching sphere. We need to ensure that our assignment to the handle attachments respects this symmetry.
\begin{definition}\label{Def_iota}
    For $1\leq k \leq n+1$, there is an orientation-preserving involution \begin{equation*}
        \begin{array}{rcl}
\iota: \Rr^k \times \Rr^{n+2-k} &\to& \Rr^k\times \Rr^{n+2-k}\\
        (x_1, x_2,\dots,x_k,y_1,y_2,\dots,y_{n+2-k}) &\mapsto& (-x_1,x_2,\dots,x_k,-y_1,y_2,\dots,y_{n+2-k})
    \end{array}
    \end{equation*} 
    which restricts to self-diffeomorphisms of $S^{k-1}\times S^{n+1-k},\ S^{k-1}\times \D^{n+2-k}, \ \D^{k}\times S^{n+1-k}$ and $H_k$. 

Using $\iota$, one can change any framed $(k-1)$-sphere $\S: S^{k-1}\times \D^{n+2-k}\inj M$ into another one $\overline{\S} := \S\circ \iota\vert_{S^{k-1}\times \D^{n+2-k}}$ which we call the reversed sphere (the orientation of its core $a(\S)$ has changed).

The surgered manifold $M(\overline{\S})$ hasn't changed and is diffeomorphic to $M(\S)$ using $\iota\vert_{\D^{k}\times S^{n+1-k}}$. From now on we will identity $M(\overline{\S})$ and $M(\S)$ using this canonical diffeomorphism.

The handle attachment $$W(\overline{\S}) = (\id \sqcup \overline{H_k})\circ M_{\smallsetminus \S}$$ is obtained by replacing $H_k$ with $\overline{H_k}$ which is $H_k$ as a manifold but where the source and target diffeomorphism and side collars have been twisted by $\iota$, i.e.
\begin{equation*}
   \overline{H_k} := W_{\iota\vert_{\D^{k}\times S^{n+1-k}}}\circ (H_k \circ_h \id_{M_{\iota\vert_{S^{k-1}\times S^{n+1-k}}}}) \circ W_{\iota\vert_{S^{k-1}\times \D^{n+2-k}}}
\end{equation*}
Finally, we have a diffeomorphism $\iota\vert_{H_k}: H_k \to \overline{H_k}$ preserving source and target diffeomorphism and side collars, i.e. inducing $\iota$ on the boundary. 

Hence, for any $\S\subseteq M$, the two 2-morphisms $W({\S})$ and $W(\overline{\S})$ are equal, where it is understood that their target has been identified using $\iota$ as above.



\end{definition}
\begin{definition}\label{Def_iotaInvariant}
    Let $\ZZ^\varepsilon: \Cob_{n+1+\varepsilon}\to \CC$ be a categorified TQFT. With the notations above, a 2-morphism $Z_k: \ZZ^\varepsilon(S^{k-1}\times \D^{n+2-k})\to \ZZ^\varepsilon(\D^k\times S^{n+1-k})$ in $\CC$ is called \textbf{$\iota$-invariant} if 
    \begin{equation*}
    \ZZ^\varepsilon(W_{\iota\vert_{\D^{k}\times S^{n+1-k}}})\circ (Z_k \circ_h \ZZ^\varepsilon(\id_{M_{\iota\vert_{S^{k-1}\times S^{n+1-k}}}})) \circ \ZZ^\varepsilon(W_{\iota\vert_{S^{k-1}\times \D^{n+2-k}}}) = Z_k
\end{equation*}
with appropriate compatibility isomorphisms of $\CC$ and $\ZZ^\varepsilon$ inserted, or equivalently if $Z(\S) = Z(\overline{\S})$ for any $\S\subseteq M$.
\end{definition}

\subsection{Main result}
\begin{theorem}\label{Thm_ETQFTfromHandle}
    Given a categorified TQFT $\ZZ^\varepsilon: \Cob_{n+1+\varepsilon}\to \CC$, $n\geq 0$, and 2-morphisms $$\left(Z_k: \ZZ^\varepsilon(S^{k-1}\times \D^{n+2-k})\to \ZZ^\varepsilon(\D^k\times S^{n+1-k})\right)_{k\in \{0,\dots,n+2\}}$$ such that \begin{itemize}
        \item[(a)] each pair $Z_k, Z_{k+1},\ k \in \{0,\dots,n+1\}$, satisfy the handle cancellation, and
        \item[(b)] each $Z_k,\ k \in \{1,\dots,n+1\}$, is $\iota$-invariant
    \end{itemize}
    there exists a unique $(n+1+1)$-TQFT \begin{equation*}
        \ZZ: \Cob_{n+1+1}\to \CC
    \end{equation*}
    extending $\ZZ^\varepsilon$ such that $\ZZ(H_k)=Z_k$.

    Reciprocally, given an $(n+1+1)$-TQFT $\ZZ$, taking $\ZZ^\varepsilon$ to be the restriction of $\ZZ$ to $\Cob_{n+1+\varepsilon}$, the morphisms $Z_k:= \ZZ(H_k)$ satisfy $(a)$ and $(b)$.

    In the non-compact case, given a categorified TQFT $\ZZ^\varepsilon$ and 2-morphisms $(Z_k)_{k\in \{0,\dots,n+1\}}$ satisfying (a) for $k\in \{0,\dots,n\}$ and (b), there exists a unique non-compact $(n+1+1)$-TQFT $\ZZ: \Cob_{n+1+1}^{nc}\to \CC$ extending $\ZZ^\varepsilon$ such that $\ZZ(H_k)=Z_k$ for $k\in \{0,\dots,n+1\}$, and reciprocally.
\end{theorem}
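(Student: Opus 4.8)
The plan is to build $\ZZ$ out of $\ZZ^\varepsilon$ by declaring its value on a $2$-morphism through a handle decomposition, and to invoke Theorem \ref{Thm_JuhaszCorner} to guarantee that the answer is independent of the decomposition chosen. The two directions of the statement are very asymmetric, so I would dispatch the converse first. Here $Z_k := \ZZ(H_k)$, and one simply applies the symmetric monoidal $2$-functor $\ZZ$ to the two families of identities that already hold in $\Cob_{n+1+1}$: the cancellation $W(\S_B')\circ W(\S_B)\simeq W_{\varphi_B}$ of Definition \ref{Def_TheHdlCancel}, and the equality $W(\S)=W(\overline{\S})$ established in Definition \ref{Def_iota}. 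Functoriality turns these into exactly the conditions of Definitions \ref{Def_SatisfyHdlCancel} and \ref{Def_iotaInvariant}, giving (a) and (b). The same computation, run at the level of a single framed sphere, also proves uniqueness: any extension with $\ZZ(H_k)=Z_k$ is forced to send $W(\S) = (\id\sqcup H_k)\circ_h \id_{M_{\smallsetminus\S}}$ to $(\id\otimes Z_k)\circ_h\id_{\ZZ^\varepsilon(M_{\smallsetminus\S})} = Z(\S)$, hence is determined on every handle attachment, and therefore, by the generation part of Theorem \ref{Thm_JuhaszCorner} together with functoriality, on all of $\Cob_{n+1+1}$.

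For existence, the first step is to construct the functor on each Hom category. On objects and $1$-morphisms I set $\ZZ := \ZZ^\varepsilon$. On a Hom category $\cob_{n+2}^{\Sigma_-,\Sigma_+}$ I would feed Theorem \ref{Thm_JuhaszCorner} three pieces of data: the value $\ZZ^\varepsilon(M)$ on each object $M$; the diffeomorphism action, which is supplied by $\ZZ^\varepsilon$ via $\ZZ(W_d):=\ZZ^\varepsilon(d)$ on the diffeomorphism cobordisms; and the handle maps $Z(\S)$ of Definition \ref{Def_Z(S)}. The point is that the relations required by Theorem \ref{Thm_JuhaszCorner} are precisely handle cancellation and $\iota$-invariance (plus compatibility with the diffeomorphism action, which is automatic from $\ZZ^\varepsilon$), so hypotheses (a) and (b) are exactly what is needed. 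This yields a well-defined functor $\ZZ_{\Sigma_-,\Sigma_+}\colon\cob_{n+2}^{\Sigma_-,\Sigma_+}\to \Hom_\CC(\ZZ^\varepsilon(\Sigma_-),\ZZ^\varepsilon(\Sigma_+))$ sending each $W(\S)$ to $Z(\S)$, and in particular $H_k$ to $Z_k$.

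The second, and most laborious, step is to promote the family $(\ZZ_{\Sigma_-,\Sigma_+})$ to a symmetric monoidal $2$-functor. The compositors and unitors for composition of $1$-morphisms, and all the coherence modifications, are inherited verbatim from $\ZZ^\varepsilon$, because in $\Cob_{n+1+1}$ every piece of coherence data (associators, unitors, interchanger, braiding) is a cobordism $W_d$ induced by a diffeomorphism, on which $\ZZ$ is forced to equal $\ZZ^\varepsilon$. The genuinely new axiom is the naturality of these compositors with respect to $2$-morphisms, i.e.\ compatibility with horizontal composition. I would reduce this to the \emph{locality} of handles: a handle attachment $W(\S)$ on $M$, whiskered by $\id_{M'}$, is again a handle attachment along the same framed sphere now viewed in $M'\circ M$ (Definition \ref{Def_hdlAttachment}), and its image is $\id_{\ZZ^\varepsilon(M')}\circ_h Z(\S)$ directly from the definition $Z(\S)=(\id\otimes Z_k)\circ_h\id$. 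Functoriality of $\circ_h$ and the interchange $2$-isomorphism then propagate this through an arbitrary decomposition. Symmetric monoidality follows in the same spirit: a disjoint union of handle decompositions is a handle decomposition of the disjoint union, and $Z(\S)$ is defined monoidally, so the monoidal compatibility again reduces to that of $\ZZ^\varepsilon$ on the coherence diffeomorphisms.

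The hard part will be this coherent assembly rather than the well-definedness, which is absorbed into Theorem \ref{Thm_JuhaszCorner} (the Morse--Cerf input of Section \ref{Sec_MorseDatum}). Concretely, the Hom-wise functors are only specified up to the choice of decomposition, so one must verify that locality interacts correctly with the \emph{weak} composition and interchange structure of $\Cob_{n+1+1}$, threading the compatibility isomorphisms of $\CC$ and of $\ZZ^\varepsilon$ through every pentagon, triangle, and hexagon; this is where the real bookkeeping of Section \ref{Sec_2Fun} lives. For the non-compact case I would run the identical argument after dropping $Z_{n+2}$ and the instance of (a) at $k=n+1$: the index-$(n+2)$ handle $H_{n+2}$ caps off an $S^{n+1}$ component to the empty manifold, so it produces a component with empty outgoing boundary and is excluded from $\Cob_{n+1+1}^{nc}$; no $2$-morphism of $\Cob_{n+1+1}^{nc}$ requires an $(n+2)$-handle in its decomposition, and the only cancellation relation involving $Z_{n+2}$ is the one removed, so the remaining data and relations suffice and the construction goes through unchanged.
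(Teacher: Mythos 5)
Your proposal follows essentially the same route as the paper: Hom-wise functors obtained from the corner version of Juh\'asz's theorem (Theorem \ref{Thm_JuhaszCorner} / Corollary \ref{Cor_Juhasz}) with $F_{M,\S}=Z(\S)$, followed by the coherent assembly into a symmetric monoidal 2-functor using the locality of $Z(\S)$ as a whiskered 2-morphism, with the converse and uniqueness obtained exactly as you describe.

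Two checks are under-acknowledged in your write-up, though both are fillable with ideas you already invoke. First, the relations demanded by Theorem \ref{Thm_JuhaszCorner} are not only handle cancellation, $\iota$-invariance and diffeomorphism-naturality: relation (3), the commutation of handle attachments along \emph{disjoint} framed spheres, must also be verified for the $Z(\S)$, and the paper does this via the interchange law in $\CC$ applied to the monoidal decomposition of Definition \ref{Def_Z(S)} (the same locality mechanism you use later for the compositors, but it is needed already at the Hom-category stage). Second, relation (4) is stated for attaching and belt spheres meeting transversely once with \emph{either} sign, whereas hypothesis (a) (Definition \ref{Def_TheHdlCancel}) only covers the standard, positively-intersecting configuration $\S_B,\S_B'$; the case $a(\S')\pitchfork b(\S)=-1$ is reduced to it by reversing one of the spheres, which is exactly where $\iota$-invariance enters the verification of (4) (see the footnote in the proof of Proposition \ref{Prop_ZonHoms}). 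A literal reading of your second paragraph, which feeds (a) and (b) in as if they were verbatim the required relations, would leave both of these points unjustified.
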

The next two sections will be devoted to the proof of this theorem.
We begin by proving a version of this statement for every Hom-category $\cob_{n+2}^{\Sigma_-,\Sigma_+}$ in Section \ref{Sec_MorseDatum}. Then we assemble these results together to have the bicategorical statement in Section \ref{Sec_2Fun}.

\section{Handle decompositions for manifolds with corners}\label{Sec_MorseDatum}

\subsection{A result on Hom categories}
The goal of this section is to prove a theorem analogous to \cite[Thm. 1.7]{Juhasz} for cobordisms with corners. The section is entirely based on \cite{Juhasz, GWW13, Milnor_hcobordism} which we adapt to cobordisms with collared corners. We begin by giving the statement.

\begin{definition}
    A \textbf{parametrized Cerf decomposition} for an $(n+2)$-cobordism with collared corners $W:M_-\to M_+$ is the data of:
\begin{itemize}
    \item collared $(n+1)$-cobordisms $M_1=M_-,M_2,\dots, M_{m+1}=M_+$ embedded in $W$
    \item a framed sphere $\S_i \subseteq M_i$ and a diffeomorphism $d_i:M_i(\S_i)\to M_{i+1}$ for each $1\leq i \leq m$. We also allow there to be no framed sphere, which \cite{Juhasz} calls the sphere $\S=\emptyset$ to smoothen notations, with $M(\emptyset):= M$ and $W(\emptyset):= \id_M$, in which case we have a diffeomorphism $d_i:M_i=M_i(\S_i)\to M_{i+1}$.
\end{itemize}
    such that there exists a diffeomorphism
    $$W \simeq W_{d_m}\circ W(\S_m)\circ \cdots\circ W_{d_1}\circ W(\S_1)$$
preserving the $M_i$'s pointwise and the side collars up to reparametrization (as in Definition \ref{Def_CobCorner}).

It is a \textbf{non-compact} decomposition if each $W_i$ is in $\Cob_{n+1+1}^{nc}$, i.e. if each $\S_i\subseteq M_i$ is a framed $(k-1)$-sphere for some $k<n+2$, or $\S_i=\emptyset$.

\end{definition}

\begin{definition}
    The \textbf{category of Cerf decompositions} $\operatorname{CD}_{n+2}^{\Sigma_-,\Sigma_+}$ has:
    \begin{description}
        \item[objects:] collared cobordisms $M: \Sigma_-\to \Sigma_+$
        \item[generating morphisms:] a morphism $e_{M,\S}: M \to M(\S)$ for every framed sphere $\S \inj M$, and\\
        a morphism $e_d: M\to M'$ for every diffeomorphism $d:M\to M'$ preserving collars.
        \item[relations:] the relation (1)--(5) of \cite{Juhasz}, namely:
        \begin{itemize}
            \item[(1)] $e_d \circ e_{d'} \sim e_{d\circ d'}$ whenever $d$ and $d'$ compose, and $e_d \sim \id$ whenever $d$ is isotopic to the identity.
            \item[(2)] $e_{M', d\circ \S}\circ e_d \sim e_{d^\S}\circ e_{M,\S}$ for $d:M\to M'$ and $\S\subseteq M$, where $d^\S: M(\S) \to M'(d\circ \S)$ is the diffeomorphism induced by $d$.
            \item[(3)]  $e_{M(\S),\S'}\circ e_{M,\S}\sim e_{M(\S'),\S}\circ e_{M,\S'}$ whenever $\S,\S' \subseteq M$ are disjoint, so $\S'$ can be pushed to $M(\S)$ and $\S$ to $M(\S')$.
            \item[(4)]  $e_{M(\S),\S'}\circ e_{M,\S} \sim e_\varphi$ whenever $a(\S')$ and $b(\S)$ intersect transversely exactly once in $M(\S)$, and $\varphi$ is the induced diffeomorphism $M\simeq M(\S)(\S')$. 
            \item[(5)] $e_{M,\S} \sim e_{M,\overline{\S}}$ where $\S$ is a framed $(k-1)$-sphere for some $1\leq k\leq n+1$ and $\overline{\S} = \S\circ \iota$, and $M(\overline{\S})$ has been canonically identified with $M(\S)$ as in Definition \ref{Def_iota}.
        \end{itemize}
    \end{description}
    
    The \textbf{category of non-compact Cerf decompositions} $\operatorname{CD}_{n+2}^{\Sigma_-,\Sigma_+, nc}$ has the same objects, same generating morphisms except the morphisms $e_{M,\S}$ where $\S\subseteq M$ is a framed $(n+1)$-sphere, and the same relations except those involving the discarded generating morphisms.
\end{definition}
The following result is the direct analogue of \cite[Thm 1.7]{Juhasz} with corners.
\begin{theorem}\label{Thm_JuhaszCorner}
    Let $\Sigma_\pm$ be closed oriented smooth $n$-manifolds. Then there is an equivalence of categories 
    \begin{equation*}
        \operatorname{CD}_{n+2}^{\Sigma_-,\Sigma_+} \simeq \cob_{n+2}^{\Sigma_-,\Sigma_+}
    \end{equation*}
    which is the identity on objects and maps $e_{M,\S}$ to $W(\S)$ and $e_d$ to $W_d$.

    It restricts to an equivalence $\operatorname{CD}_{n+2}^{\Sigma_-,\Sigma_+, nc} \simeq \cob_{n+2}^{\Sigma_-,\Sigma_+, nc}$.
\end{theorem}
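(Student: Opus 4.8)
The plan is to reduce Theorem \ref{Thm_JuhaszCorner} to Juh\'asz's original theorem \cite[Thm. 1.7]{Juhasz} by a ``flattening'' argument that trades the collared corner structure for an ordinary cobordism with an auxiliary set of parametrization data. The point is that the cobordism category $\cob_{n+2}^{\Sigma_-,\Sigma_+}$ differs from the cobordism categories Juh\'asz considers only in two respects: (i) our objects and morphisms carry side collars and a product structure $(\overline{\Sigma_-}\sqcup\Sigma_+)\times[-1,1]$ on the side boundary, and (ii) morphisms are considered up to diffeomorphisms preserving side collars only up to reparametrization of the $[-1,1]$-coordinate. The category $\operatorname{CD}_{n+2}^{\Sigma_-,\Sigma_+}$ is defined by exactly the generators and relations (1)--(5) that Juh\'asz uses, transported to the collared setting. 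So the strategy is: first establish that the collared data is homotopically inessential (a contractible choice), so that one can pass back and forth between a collared cobordism and an underlying manifold-with-corners to which Juh\'asz's machinery applies; then check that the functor $\operatorname{CD}\to\cob$ is well-defined, essentially surjective, full, and faithful, importing each property from \cite{Juhasz} via this correspondence.

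First I would make the functor precise and check well-definedness: one must verify that each generator $e_{M,\S}\mapsto W(\S)$ and $e_d\mapsto W_d$ respects relations (1)--(5). Relations (1)--(3) are formal consequences of the definitions of handle attachment and of composition of diffeomorphism-cobordisms, and follow as in \cite{Juhasz}; relation (4) is precisely the handle-cancellation diffeomorphism packaged in Definition \ref{Def_TheHdlCancel} (arising from \cite[Def. 2.17]{Juhasz}), now checked to preserve side collars; and relation (5) is the $\iota$-invariance of Definition \ref{Def_iota}, where I already have a diffeomorphism $\iota\vert_{H_k}:H_k\to\overline{H_k}$ preserving source/target and side collars. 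Essential surjectivity is immediate since the functor is the identity on objects. Fullness amounts to the statement that every collared cobordism $W:M_-\to M_+$ admits a parametrized Cerf decomposition, i.e. a handle decomposition rel boundary compatible with the side collars; this is Morse theory on a $\langle 2\rangle$-manifold, where one chooses a Morse function constant on the side boundary and adapted to the product structure there, and reads off the sequence of surgeries. Faithfulness is the statement that any two parametrized Cerf decompositions of diffeomorphic cobordisms are related by the moves (1)--(5); this is Cerf theory, tracking generic paths of such Morse functions.

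The main obstacle, and the real content beyond \cite{Juhasz}, is handling the corners and the side collars throughout the Morse- and Cerf-theoretic arguments. Juh\'asz works with cobordisms between manifolds-with-boundary, whereas here $W$ is an $\langle 2\rangle$-manifold whose side boundary $\partial_0 W$ is a product $(\overline{\Sigma_-}\sqcup\Sigma_+)\times[-1,1]$ carrying its own collar. I expect the crux to be showing that one may always choose Morse functions, and generic paths between them, that are compatible with the side-collar product structure --- in particular constant in the $[-1,1]$-direction near the corner and having no critical points on the side boundary --- so that the handle attachments take place in the interior and the boundary parametrization is carried along rigidly. This requires a relative/parametrized version of the usual genericity and rearrangement lemmas (existence of adapted Morse functions, elimination of non-generic crossings, handle slides and cancellations) adapted to the $\langle 2\rangle$-manifold setting, and it is where one must be ``a little bit more careful,'' as flagged in the introduction regarding the critical points of \cite[Section 7]{WalkerOnWitten}. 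Once these adapted versions are in place, the combinatorial bookkeeping --- that the resulting moves are exactly (1)--(5) --- follows Juh\'asz verbatim.

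Finally, the non-compact restriction is handled uniformly: discarding the index-$(n+2)$ handle generator $e_{M,\S}$ corresponds, in Morse-theoretic terms, to forbidding critical points of top index, equivalently requiring every component of $W$ to have nonempty outgoing boundary, which is exactly the condition defining $\Cob_{n+1+1}^{nc}$. Since the discarded generators appear only in relations that are themselves discarded, the same equivalence restricts without further argument to $\operatorname{CD}_{n+2}^{\Sigma_-,\Sigma_+,nc}\simeq\cob_{n+2}^{\Sigma_-,\Sigma_+,nc}$.
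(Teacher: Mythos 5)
Your proposal is correct and follows essentially the same route as the paper: the paper does not literally ``flatten'' and invoke Juh\'asz's theorem as a black box, but rather re-runs his Morse-- and Cerf--theoretic lemmas in the collared setting, with exactly the adaptation you identify as the crux --- Morse functions that restrict to the $[-1,1]$-projection on the side collars and gradient-like vector fields that are vertical there, so handles attach in the interior and the side parametrization is carried rigidly --- yielding fullness from existence of Morse data and faithfulness from uniqueness of the induced decomposition up to relations (1)--(5). The non-compact case is likewise handled as you describe, by arranging the Cerf paths to avoid introducing index-$(n+2)$ critical points.
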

\begin{corollary}\label{Cor_Juhasz}
    Given a functor \begin{equation*}
        F^\varepsilon: \cob_{(n+1)+\varepsilon}^{\Sigma_-,\Sigma_+} \to \CC
    \end{equation*}
    and a morphism $F_{M,\S}: F(M) \to F(M(\S))$ for every $\S \subseteq M$ (resp. every $\S$ of index $k<n+1$) satisfying relations (2)--(5), there exists a unique functor \begin{equation*}
        F: \cob_{n+2}^{\Sigma_-,\Sigma_+}\to \CC \quad\quad (\text{resp. } F: \cob_{n+2}^{\Sigma_-,\Sigma_+, nc}\to \CC)
    \end{equation*}
    extending $F^\varepsilon$ and such that $F(W(\S))=F_{M,\S}$.
\end{corollary}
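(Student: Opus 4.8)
The plan is to deduce the corollary formally from Theorem \ref{Thm_JuhaszCorner} by exploiting the fact that $\operatorname{CD}_{n+2}^{\Sigma_-,\Sigma_+}$ is, by construction, a category presented by generators and relations. Concretely, a functor $\widetilde{F}: \operatorname{CD}_{n+2}^{\Sigma_-,\Sigma_+}\to\CC$ is the same data as: an object of $\CC$ for every collared cobordism $M:\Sigma_-\to\Sigma_+$, a morphism of $\CC$ for each generating morphism $e_{M,\S}$ and $e_d$, subject to the requirement that relations (1)--(5) are sent to commuting/equal morphisms in $\CC$. So the first step is to assemble exactly this data from the hypotheses.

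I would set $\widetilde{F}(M):=F^\varepsilon(M)$ on objects, $\widetilde{F}(e_d):=F^\varepsilon(d)$ on diffeomorphism generators (using that a collar-preserving diffeomorphism $d:M\to M'$ is, up to isotopy, precisely a morphism of $\cob_{(n+1)+\varepsilon}^{\Sigma_-,\Sigma_+}$), and $\widetilde{F}(e_{M,\S}):=F_{M,\S}$ on handle generators. Then I would verify the relations. Relation (1) holds \emph{automatically}: since the morphisms of $\cob_{(n+1)+\varepsilon}^{\Sigma_-,\Sigma_+}$ are isotopy classes of collar-preserving diffeomorphisms, functoriality of $F^\varepsilon$ gives both $F^\varepsilon(d)\circ F^\varepsilon(d')=F^\varepsilon(d\circ d')$ and $F^\varepsilon(d)=\id$ whenever $d$ is isotopic to the identity. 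Relations (2)--(5) are literally the standing hypotheses on the family $(F_{M,\S})$. Hence $\widetilde{F}$ is a well-defined functor on $\operatorname{CD}_{n+2}^{\Sigma_-,\Sigma_+}$.

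It remains to transport $\widetilde{F}$ across the equivalence of Theorem \ref{Thm_JuhaszCorner}. I would use that this equivalence $Q:\operatorname{CD}_{n+2}^{\Sigma_-,\Sigma_+}\to\cob_{n+2}^{\Sigma_-,\Sigma_+}$ is the identity on objects; a bijective-on-objects fully faithful functor is an isomorphism of categories, so $Q$ admits a strict inverse $Q^{-1}$ satisfying $Q^{-1}(W(\S))=e_{M,\S}$ and $Q^{-1}(W_d)=e_d$ on the nose. Setting $F:=\widetilde{F}\circ Q^{-1}$ then yields a functor $\cob_{n+2}^{\Sigma_-,\Sigma_+}\to\CC$ with $F(W(\S))=F_{M,\S}$ and $F(W_d)=F^\varepsilon(d)$, the latter identity being exactly the assertion that $F$ extends $F^\varepsilon$. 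For uniqueness, any functor $F$ with these two properties forces $\widetilde{F}:=F\circ Q$ to take the prescribed values on every generating morphism of $\operatorname{CD}_{n+2}^{\Sigma_-,\Sigma_+}$; since the generators generate, $\widetilde{F}$ and therefore $F$ is determined. The non-compact statement follows verbatim, replacing $\operatorname{CD}_{n+2}^{\Sigma_-,\Sigma_+}$ by $\operatorname{CD}_{n+2}^{\Sigma_-,\Sigma_+,nc}$ and invoking the restricted equivalence of Theorem \ref{Thm_JuhaszCorner}. I do not expect a genuine obstacle here: this is a formal consequence of the presentation. The only points needing care are the bookkeeping observations that relation (1) is subsumed by the functoriality of $F^\varepsilon$ rather than being an extra assumption, and that ``$F$ extends $F^\varepsilon$'' unwinds precisely to $F(W_d)=F^\varepsilon(d)$ on the subcategory of mapping-cylinder cobordisms, so that the identity-on-objects equivalence can be inverted strictly to obtain equalities (not merely isomorphisms) of values.
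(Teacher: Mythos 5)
Your argument is correct and is exactly the intended (and essentially only) route: the paper treats the corollary as an immediate formal consequence of Theorem \ref{Thm_JuhaszCorner}, using that the identity-on-objects, fully faithful equivalence with the presented category $\operatorname{CD}_{n+2}^{\Sigma_-,\Sigma_+}$ is an isomorphism of categories, and that relation (1) is subsumed by functoriality of $F^\varepsilon$. No discrepancies with the paper's reasoning.
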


The proof follows \cite{Juhasz}. In essence, we choose a Morse function and a gradient-like vector field on our cobordisms, and check that this gives a parametrized Cerf decomposition which does not depend on the choices of Morse function and gradient-like vector field up to relations (1)--(5).

The only difference with \cite{Juhasz} is the presence of corners which might interact with the handle attachments. This problem is avoided by asking that gradient-like vector fields are vertical in collars. Hence flowing along a gradient-like vector field will never make some point of the interior of the manifold enter the corners, see Remark \ref{rmk:flowverticalside}.

\subsection{Morse data and decompositions}

\begin{definition}\label{Def_gradientlike}
    Let $\Sigma_\pm$ be $n$-manifolds, $M_\pm: \Sigma_-\to \Sigma_+$ be collared $(n+1)$-cobordisms and $W: M_-\to M_+$ a collared $(n+2)$-cobordism. A \textbf{Morse function} on $W$ is a smooth function
\begin{equation*}
    f: W \to [-1,1]
\end{equation*}
such that $f$ has finitely many critical points, all non-degenerate and happening away from the boundary and the side collars, and such that $f$ restricted to the side collars $\Sigma\pm\times[-1,1]\times[\pm1,\pm\frac{1}{2})\inj W$ coincides with projection on the $[-1,1]$-coordinate, and the source and target diffeomorphisms map $M_\pm$ isomorphically onto $f^{-1}(\pm1)$. It is called \textbf{excellent} if all the critical points of $f$ have distinct values.

A \textbf{gradient-like vector field} for $f$ is a vector field $v$ on $W$ such that for every $p \in W \smallsetminus \operatorname{Crit}(f),\ v(f)>0$ and for every $p \in \operatorname{Crit}(f)$ there exists an orientation-preserving coordinate system $(x_1,\dots,x_k, y_1,\dots, y_{n+2-k})$ centered at $p$ such that $f(x,y) = f(p)+ ||y||^2-||x||^2$ and $v = \operatorname{grad}f$. On the side collars $\Sigma_\pm\times [\pm1,\pm\frac{1}{2})\times [-1,1]$, we require that $v=\partial t$ where $t$ is the $[-1,1]$-coordinate.

A \textbf{Morse datum} for $W$ is the data of 
\begin{itemize}
    \item an excellent Morse function $f:W\to [-1,1]$
    \item a tuple $\underline{b}= \{ b_0=-1<b_1<\cdots<b_{m+1}=1\}$ such that $b_i$ is a regular value for $f$ and $f$ has at most one critical point in $f^{-1}([b_i, b_{i+1}])$
    \item a gradient-like vector field $v$ for $f$
\end{itemize}
See Figure \ref{fig:MorseDatum}.

Let $\Phi: W\to W'$ be a diffeomorphism preserving $M_\pm$ and preserving the side collars up to the reparametrization $\varphi:[-1,1]\to[-1,1]$ as in Definition \ref{Def_CobCorner}. A Morse datum $(f,v,\underline{b})$ on $W$ induces a Morse datum $(\varphi\circ f \circ \Phi^{-1}, \varphi'(f(-))\cdot\Phi^{-1}_*v,  \varphi(\underline{b}))$ on $W'$.
\end{definition}
\begin{figure}
    \centering
\begin{tikzpicture}[xscale = 1.5]
\draw[very thick] (-0.9,0.5) --++(0.5,0);
\draw (-0.5,0) arc(-90:0:0.3 and 0.25) arc(0:90:0.2 and 0.25);
\draw (1,0)--(0.5,0) arc(-90:-180:0.2 and 0.25) arc(180:90:0.3 and 0.25);
\draw[dashed] (0.6,0.5) --++(0.5,0);
\draw (-0.9,0.5)  -- ++(0,4);
\fill[gray!30, opacity = 0.5] (-1,0) rectangle ++(0.5,4);
\fill[gray!30, opacity = 0.5] (-0.9,0.5) rectangle ++(0.5,4);
\draw (-1,0) -- ++(0,4);
\draw[very thick] (-1,0) -- ++(0.5,0);
\draw[very thick] (-1,4) --++(0.5,0);
\draw[very thick] (-0.9,4.5) --++(0.5,0);
\draw (-0.9,4.5) --++(3.5,0);
\draw (-1,4) --++(3.5,0);
\draw (-0.2,0.25) arc(180:0:0.25 and 1);
\node[scale = 0.5, rotate = 80] at (-0.25,0.8){$\to$};
\node[scale = 0.5, rotate = 100] at (0.35,0.8){$\to$};
\node[scale = 0.5, rotate = 60] at (-0.15,1.15){$\to$};
\node[scale = 0.5, rotate = 120] at (0.25,1.15){$\to$};
\node[scale = 0.5, rotate = 60, black!50] at (0.13,1.45){$\to$};
\node[scale = 0.5, rotate = 120] at (-0.03,1.45){$\to$};
\node[scale = 0.5, rotate = 75, black!50] at (0.25,1.8){$\to$};
\node[scale = 0.5, rotate = 105] at (-0.15,1.8){$\to$};
\node[scale = 0.5, rotate = 90] at (-0.75,0.3){$\to$};
\node[scale = 0.5, rotate = 90] at (-0.75,1.15){$\to$};
\node[scale = 0.5, rotate = 90] at (-0.75,2){$\to$};
\node[scale = 0.5, rotate = 90] at (-0.75,2.85){$\to$};
\node[scale = 0.5, rotate = 90] at (-0.75,3.7){$\to$};
\begin{scope}[xshift = 0.1cm, yshift = 0.5cm]
\node[scale = 0.5, rotate = 90, black!50] at (-0.75,0.3){$\to$};
\node[scale = 0.5, rotate = 90, black!50] at (-0.75,1.15){$\to$};
\node[scale = 0.5, rotate = 90, black!50] at (-0.75,2){$\to$};
\node[scale = 0.5, rotate = 90, black!50] at (-0.75,2.85){$\to$};
\node[scale = 0.5, rotate = 90] at (-0.75,3.7){$\to$};
\end{scope}
\node[scale = 0.5, rotate = 90] at (0.75,1){$\to$};
\node[scale = 0.5, rotate = 90] at (0.75,2){$\to$};
\node[scale = 0.5, rotate = 90] at (0.75,3){$\to$};
\node[scale = 0.5, rotate = 90, black!50] at (0.85,1.5){$\to$};
\node[scale = 0.5, rotate = 90, black!50] at (0.85,2.5){$\to$};
\node[scale = 0.5, rotate = 90, black!50] at (0.85,3.5){$\to$};
\begin{scope}[xshift = 1.5cm, yshift = 2cm]
\draw[very thick] (0.6,-1.5) -- ++(0.5,0);
\draw (-0.5,-2) arc(-90:0:0.3 and 0.25) arc(0:90:0.2 and 0.25);
\draw (0.5,-2) arc(-90:-180:0.2 and 0.25) arc(180:90:0.3 and 0.25);
\draw (-1,2) --++(2,0);
\draw (-0.9,2.5) --++(2,0);
\fill[gray!30, opacity = 0.5] (0.5,-2) rectangle ++(0.5,4);
\fill[gray!30, opacity = 0.5] (0.6,-1.5) rectangle ++(0.5,4);
\draw (1,-2) -- ++(0,4);
\draw (1.1,-1.5) -- ++(0,4);
\draw[very thick] (0.5,-2) -- ++(0.5,0);
\draw[very thick] (0.5,2) -- (1,2);
\draw[very thick] (0.6,2.5) -- (1.1,2.5);
\draw (-0.2,0.25) arc(180:0:0.25 and 1);
\draw (-0.2,0.25) -- ++(0,-2);
\draw (0.3,0.25) -- ++(0,-2);
\node[scale = 0.5, rotate = 80] at (-0.25,0.8){$\to$};
\node[scale = 0.5, rotate = 100] at (0.35,0.8){$\to$};
\node[scale = 0.5, rotate = 60] at (-0.15,1.15){$\to$};
\node[scale = 0.5, rotate = 120] at (0.25,1.15){$\to$};
\node[scale = 0.5, rotate = 60, black!50] at (0.13,1.45){$\to$};
\node[scale = 0.5, rotate = 120] at (-0.03,1.45){$\to$};
\node[scale = 0.5, rotate = 75, black!50] at (0.25,1.8){$\to$};
\node[scale = 0.5, rotate = 105] at (-0.15,1.8){$\to$};
\end{scope}
\begin{scope}[xshift = 3cm, yshift = 0cm]
\node[scale = 0.5, rotate = 90] at (-0.75,0.3){$\to$};
\node[scale = 0.5, rotate = 90] at (-0.75,1.15){$\to$};
\node[scale = 0.5, rotate = 90] at (-0.75,2){$\to$};
\node[scale = 0.5, rotate = 90] at (-0.75,2.85){$\to$};
\node[scale = 0.5, rotate = 90] at (-0.75,3.7){$\to$};
\end{scope}
\begin{scope}[xshift = 3.1cm, yshift = 0.5cm]
\node[scale = 0.5, rotate = 90, black!50] at (-0.75,0.3){$\to$};
\node[scale = 0.5, rotate = 90, black!50] at (-0.75,1.15){$\to$};
\node[scale = 0.5, rotate = 90, black!50] at (-0.75,2){$\to$};
\node[scale = 0.5, rotate = 90, black!50] at (-0.75,2.85){$\to$};
\node[scale = 0.5, rotate = 90] at (-0.75,3.7){$\to$};
\end{scope}
\draw[very thick] (4,-0.25)--++(0,0.5) node{$-$} node[right]{$b_0=-1$}-- ++(0,2) node{$-$} node[right]{$b_1=0$}-- ++(0,2) node{$-$} node[right]{$b_2=1$} --++(0,0.5);
\node[scale = 1.3] at (3.25,2.25) {$\overset{f}{\longrightarrow}$};
\end{tikzpicture}    
\caption{A Morse datum on a cobordism with corners.}
    \label{fig:MorseDatum}
\end{figure}
\begin{remark}\label{rmk:flowverticalside}
    We can define a vector field $\frac{v}{v(f)}$ away from critical points. Let us denote $\phi_t(x)$ the flow along this vector field from the point $x\in W$ for a time $t\in\mathbb R$, when it is defined. Then $f(\phi_t(x)) = f(x)+t$. Because $v$ is vertical in the side corners, this flow cannot transport a point of the interior to the side boundary. It is therefore defined on the largest interval $[a,b]$ such that $f(\phi_a(x))=-1$ or $\phi_a(x)$ is a critical point, and $f(\phi_b(x))=1$ or $\phi_b(x)$ is a critical point.
\end{remark}
\begin{remark}
In the cobordism bicategory $\Cob_{n+1+1}$, there are two directions to compose in, which we called horizontal and vertical. In this section, we are interested in decomposing 2-morphisms as a vertical composition. Therefore, our Morse functions and vector fields are vertical.
\end{remark}
\begin{example}
    If $d:M_-\to M_+$ is a diffeomorphism and $W_d=M_+\times [-1,1]$ is the induced 2-morphism, then the projection on the $[-1,1]$-coordinate $t$ is a Morse function, without any critical point, and $v=\partial t$ is a gradient-like vector field. A Morse datum is given by taking $\underline{b}=\{-1,1\}$
\end{example}
\begin{example}  
    The cobordism $H_k$ from Definition \ref{Def_handle} is equipped with a Morse function $f_k = ||y||^2-||x||^2$ in the sense above. It comes with a coordinate system on all of $H_k$ such that $v=\operatorname{grad} f$ is a gradient-like vector field in the sense above except that it may be a positive scalar times $\partial t$ on the side collars. We may renormalize $v$ away from the critical point so that it is strictly equal to $\partial t$ on the side collars.
    
    If $M$ is a 1-morphism and $\S \subseteq M$ a framed sphere, then $W(\S) = (\id\sqcup H_k)\circ_h \id_{M'}$ also has a Morse function which is the projection on the $[-1,1]$-coordinate on the $\id$ components and $f_k$ on $H_k$. They glue because they are both the $[-1,1]$-coordinate on side collars. The gradient-like vector fields glue similarly into a gradient-like vector field on  $W(\S)$. A Morse datum is given by taking $\underline{b}=\{-1,1\}$
\end{example}
A 2-morphism $W$ equipped with a Morse datum with $\underline{b}=\{-1,1\}$ is called \textbf{elementary}.
\begin{lemma}\label{Lem_elementaryCob}
    Any elementary cobordism $W:M_-\to M_+$ is of the form $W= W_d \circ W(\S)$ for some framed sphere $\S\subseteq M_-$, possibly $\S=\emptyset$ i.e. $W(\S)=\id_M$, and diffeomorphism $d: M_-(\S)\to M_+$. Moreover, these are well-defined up to isotopy of $d$ and $\S$ and reversal $\S\leftrightarrow\overline{\S}$.
\end{lemma}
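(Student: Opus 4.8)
The plan is to extract the framed sphere and the diffeomorphism directly from the Morse datum on the elementary cobordism $W$, using the gradient-like flow to produce the relevant identifications. Since $W$ is elementary, $\underline{b} = \{-1,1\}$, so $f^{-1}([-1,1]) = W$ contains at most one critical point. I would split into two cases according to whether a critical point exists.

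First, in the case with no critical point, I would use the gradient-like flow to build a diffeomorphism. Because $v$ is gradient-like with $v(f) > 0$ everywhere and $v = \partial t$ on the side collars, every flow line starting on $f^{-1}(-1)$ reaches $f^{-1}(1)$ in finite time and stays vertical in the collars, so the flow identifies $W \simeq M_- \times [-1,1]$ respecting side collars and the source/target diffeomorphisms. This realizes $W \simeq W_d$ for the diffeomorphism $d: M_- \to M_+$ obtained by flowing from the source to the target level set; here $\S = \emptyset$ and $W(\emptyset) = \id_{M_-}$.

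Second, in the case with exactly one critical point $p$ of index $k$, I would use the Morse coordinate chart around $p$, in which $f(x,y) = f(p) + \|y\|^2 - \|x\|^2$ and $v = \operatorname{grad} f$, to match $W$ near $p$ with the standard handle $H_k$ of Definition \ref{Def_handle}. Flowing the region outside a small standard neighborhood of the descending and ascending manifolds by the gradient-like flow identifies the complement with a collar of the form $M_{\smallsetminus \S} \times [-1,1]$, where the attaching sphere $\S \subseteq M_-$ is the intersection of the descending manifold of $p$ with $f^{-1}(-1)$, together with the framing determined by the coordinate chart. Gluing the standard handle piece to this collar along the flow then exhibits $W \simeq W_d \circ W(\S)$, where $d: M_-(\S) \to M_+$ is again the diffeomorphism induced by the flow on the outgoing boundary. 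This step is the direct corner-aware analogue of the standard Morse-theoretic normal form \cite{Milnor_hcobordism}, with the verticality of $v$ on the side collars guaranteeing that the reparametrization never drags interior points into the corners.

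For the well-definedness statement, I would argue that the only choices made were the coordinate chart realizing the Morse normal form around $p$ and the flow; changing these moves $(\S, d)$ within an isotopy. The framing and orientation of the attaching sphere come from the orientation-preserving coordinate system of Definition \ref{Def_gradientlike}, and different admissible choices differ by an isotopy — except that the orientation of the core $a(\S)$ is only determined up to the involution $\iota$, since reversing the first coordinate $x_1$ preserves $f$ and the gradient-like condition. This is exactly the reversal $\S \leftrightarrow \overline{\S}$ of Definition \ref{Def_iota}. I expect the main obstacle to be the careful bookkeeping at the corners: one must verify that the flow-based identifications respect the regular side collars up to the allowed reparametrization of the $[-1,1]$-coordinate, and that gluing the handle region to the flowed collar produces a smooth structure well-defined up to the notion of diffeomorphism in Definition \ref{Def_CobCorner}. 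The verticality requirement $v = \partial t$ on the collars is precisely what makes this bookkeeping routine rather than delicate, but stating it cleanly is where the real work lies.
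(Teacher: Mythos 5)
Your proposal is correct and follows essentially the same route as the paper: split on the existence of a critical point, use the vertical gradient-like flow to trivialize the complement of a standard handle neighborhood modeled on $\varepsilon H_k$ in the Morse chart, and account for the ambiguity in the choice of orientation-preserving coordinate system (whose space has two components for $1\leq k\leq n+1$) via the reversal $\S\leftrightarrow\overline{\S}$. No substantive differences.
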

\begin{proof}
    If the Morse function $f$ on $W$ doesn't have any critical point, then we take $\S=\emptyset$ and the diffeomorphism $d$ is given by flowing along $\frac{v}{v(f)}$ from $M_-$ to $M_+$ as in \cite[Thm. 3.4]{Milnor_hcobordism}. This flow is well defined away from $M_\pm$ by Remark \ref{rmk:flowverticalside}. 

    Otherwise, $f$ has a single critical point $p\in W$ of index $k$. There exists an orientation-preserving coordinate system $(x_1,\dots,x_k, y_1,\dots, y_{n+2-k})$ centered at $p$ such that $f(x,y) = f(p)+ ||y||^2-||x||^2$ and $v=\operatorname{grad}f$. Let $\varepsilon>0$ be small enough so that $\varepsilon H_k \subseteq \Rr^{n+2}$ is contained in the coordinate system, hence can be identified with a subspace of $W$. Note that by construction the image of the side boundary of $\varepsilon H_k$ is made of gradient lines for $f$, and the source and target boundary lie in the level sets $f^{-1}(f(p)\pm \varepsilon^2)$. Away from $\varepsilon H_k$, $f$ has no critical points and we can flow along $\frac{v}{v(f)}$. We still denote $\phi_t(x)\in W$ the point obtained by flowing for a time $t$ starting from $x$. 

    The framed $(k-1)$-sphere is given by $$\S: S^{k-1}\times \D^{n+2-k} \overset{\varepsilon\cdot}{\longrightarrow}
    f^{-1}(f(p)- \varepsilon^2) \overset{\phi_{-1-f(p)+\varepsilon^2}}{\longrightarrow}M_-$$
and more generally flowing from the source boundary $\varepsilon S^{k-1}\times \D^{n+2-k} \subseteq \varepsilon H_k \subseteq W$ for a time $t-f(p)+\varepsilon^2$ gives an embedding $S^{k-1}\times \D^{n+2-k} \times [-1, f(p)-\varepsilon^2]\inj W$ on which $f$ restricts to the projection on the $t$-coordinate. 

Similarly, We get an embedding $\D^{k}\times S^{n+1-k} \times [f(p)+\varepsilon^2,1] \inj W$. Denote $$H = \big(\D^{k}\times S^{n+1-k} \times [f(p)+\varepsilon^2,1]\big) \cup\varepsilon H_k \cup \big(S^{k-1}\times \D^{n+2-k} \times [-1, f(p)-\varepsilon^2]\big) \subseteq W$$
which is diffeomorphic to $\id \circ H_k \circ \id \simeq H_k$.

Away from $H$, the flow $\phi$ gives an identification 
$$W\smallsetminus H \simeq (M_-\smallsetminus \S)\times [0,1]$$
and hence a diffeomorphism $f^{-1}(1) \simeq M_-(\S)$. Along with the target diffeomorphism, this gives us the desired diffeomorphism $d: M_-(\S)\simeq M_+$.
We have decomposed $W$ into 
$$W \simeq (W\smallsetminus H) \cup H \simeq (M_-\times [-1,1])\cup H_k$$
with target diffeomorphism $d$, i.e. $W \simeq W_d \circ W(\S)$ as claimed.

As in \cite[Rem. 3.13]{Juhasz}, this construction only depends on the choice of the coordinate system and the value of $\varepsilon$. Isotopic coordinate systems and $\varepsilon$ will give isotopic $\S$ and $d$. The space of such coordinate systems is homotopy equivalent to $SO(k, n+2-k)$, which is connected for $k\in\{0,n+2\}$ and has two components, corresponding to $\S$ and $\overline{\S}$ otherwise.
\end{proof}
\begin{proposition}
    Let $W$ be a collared $(n+2)$-cobordism. A Morse datum $(f,v,\underline{b})$ on $W$ induces a parametrized Cerf decompositions of $W$ well-defined up to relation (1), (2) and (5).
\end{proposition}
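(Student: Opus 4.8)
The statement asserts that a Morse datum $(f,v,\underline{b})$ on $W$ yields a parametrized Cerf decomposition, canonical up to relations (1), (2) and (5). The natural strategy is to slice $W$ along the level sets $f^{-1}(b_i)$ and invoke Lemma \ref{Lem_elementaryCob} on each slice. First I would set $M_i := f^{-1}(b_i)$ for $0\leq i\leq m+1$, using that each $b_i$ is a regular value so that $M_i$ is a smooth collared $(n+1)$-cobordism $\Sigma_-\to\Sigma_+$ embedded in $W$ (the side-collar condition $f=t$ guarantees that $M_i$ meets the side boundary in the prescribed collared way, and $M_0=M_-$, $M_{m+1}=M_+$ by the boundary conditions on $f$). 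Restricting $(f,v)$ to each piece $W_i := f^{-1}([b_i,b_{i+1}])$ gives a Morse datum with $\underline{b}=\{b_i,b_{i+1}\}$, so each $W_i$ is elementary. The key point is that $v$ is vertical on the side collars, so the flow used in Lemma \ref{Lem_elementaryCob} stays away from the corners and the slicing is compatible with collars.

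Next I would apply Lemma \ref{Lem_elementaryCob} to each $W_i$: it produces a framed sphere $\S_i\subseteq M_i$ (possibly empty) and a diffeomorphism $d_i : M_i(\S_i)\to M_{i+1}$ with $W_i\simeq W_{d_i}\circ W(\S_i)$. Concatenating the diffeomorphisms that witness each $W_i\simeq W_{d_i}\circ W(\S_i)$ along the slices $M_i$ produces a global diffeomorphism
\begin{equation*}
W \simeq W_{d_m}\circ W(\S_m)\circ\cdots\circ W_{d_1}\circ W(\S_1)
\end{equation*}
preserving each $M_i$ pointwise and respecting side collars up to reparametrization. This is exactly the data of a parametrized Cerf decomposition. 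The condition that $f$ has at most one critical point in each $f^{-1}([b_i,b_{i+1}])$ is precisely what makes each $W_i$ elementary, so no further subdivision is needed.

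The content of the proposition, and the main obstacle, is the well-definedness clause: the resulting decomposition must be independent of all choices up to relations (1), (2) and (5). There are three sources of ambiguity to track. The first is the choice of Morse coordinate system and the parameter $\varepsilon$ in Lemma \ref{Lem_elementaryCob}: by the last paragraph of that lemma's proof, the space of coordinate systems is homotopy equivalent to $SO(k,n+2-k)$, so $\S_i$ and $d_i$ are well-defined up to isotopy, except that the two components in indices $1\leq k\leq n+1$ interchange $\S_i$ and $\overline{\S_i}$ — this ambiguity is absorbed exactly by relation (5). The second is the choice of the diffeomorphisms $d_i$ themselves, well-defined only up to isotopy, handled by relation (1). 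The third, and the genuinely delicate one, is the choice of the intermediate levels $\underline{b}$ and the global identification of the $M_i$: changing $d_i$ reshuffles where the adjacent slice $M_{i+1}$ sits, and one must check that composing $e_{d_i}$ with the next handle generator $e_{M_{i+1},\S_{i+1}}$ is compatible, up to relation (2), with transporting $\S_{i+1}$ back across $d_i$.

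Concretely, the hard part will be verifying that two admissible level tuples for the \emph{same} $(f,v)$ give decompositions equal up to (1), (2), (5): sliding a level $b_i$ across a region with no critical point changes $M_i$ by a flow-diffeomorphism, which by relation (1) can be absorbed into the adjacent $e_{d}$-generators, while the compatibility of $e_{d}$ with a handle attached just above it is governed precisely by the naturality relation (2), $e_{M',d\circ\S}\circ e_d \sim e_{d^\S}\circ e_{M,\S}$. I would organize this as a discrete homotopy argument: any two choices of $\underline{b}$ (both subordinate to $(f,v)$) are connected by elementary moves — inserting or deleting a regular level, or sliding a level through a critical-point-free band — and I would check each elementary move changes the word in $\operatorname{CD}_{n+2}^{\Sigma_-,\Sigma_+}$ only by an application of (1), (2) or (5). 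I emphasize that relations (3) and (4) are \emph{not} needed here: those concern changing the \emph{ordering} of critical values or introducing critical-point cancellations, i.e. changing $(f,v)$ itself, which is the subject of the subsequent Cerf-theoretic arguments and not of a single fixed Morse datum.
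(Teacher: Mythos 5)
Your proof is correct and follows the same route as the paper: slice $W$ along the regular levels $f^{-1}(b_i)$, apply Lemma \ref{Lem_elementaryCob} to each elementary piece $f^{-1}([b_i,b_{i+1}])$, and observe that the residual ambiguities (isotopy of the coordinate system and of $d_i$, reversal $\S_i\leftrightarrow\overline{\S}_i$) are exactly absorbed by relations (1), (2) and (5). One remark: your final paragraph, on independence of the choice of $\underline{b}$, addresses something outside the scope of this statement --- the tuple $\underline{b}$ is part of the given Morse datum, so it is not a choice to be quotiented out here; that independence (via refining to $\underline{b}\cup\underline{b}'$ and deleting levels) is deferred to Proposition \ref{Prop_UniquenessCD}.
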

\begin{proof}
     Each $W_i = f^{-1}([b_i, b_{i+1}])$ is an elementary cobordism from $M_i=f^{-1}(b_i)$ to $ M_{i+1}=f^{-1}(b_{i+1})$. By Lemma \ref{Lem_elementaryCob}, it is diffeomorphic to a composition of $W_i \simeq W_{d_i}\circ W(\S_i)$ where $d_i$ is well-defined up to isotopy, and $\S_i$ up to isotopy and reversal $\S_i\leftrightarrow\overline{\S}_i$, all of which are implied by relations (1), (2) and (5).
\end{proof}
The following Lemma is analogous to \cite[Lem. 3.7]{Milnor_hcobordism}, see also \cite[Lemma 2.6]{GWW13}.
\begin{lemma}\label{Lem_functionsGlue}
    Given $W:M_1\to M_2$ and $W':M_2\to M_3$ both equipped with Morse datum $(f,v,\underline{b})$ and $(f',v', \underline{b}')$, then there is a canonical smooth structure on the composition $W'\circ W$, and a canonical Morse datum well-defined up to isotopy of $v$.
\end{lemma}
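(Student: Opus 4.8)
The plan is to follow the strategy of \cite[Lem.~3.7]{Milnor_hcobordism} and \cite[Lemma~2.6]{GWW13}, with the extra care demanded by the side collars. First I would use the gradient flows to manufacture collars of $M_2$ that are compatible with the side collars; then I would assemble $f$ and $f'$ into a single function on $W'\circ W$ using the reparametrization already built into the definition of composition; and finally I would glue the vector fields and account for the remaining choices by an isotopy of $v$.

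To build the collars, recall that $1$ is a regular value of $f$ and that the target diffeomorphism identifies $M_2$ with $f^{-1}(1)$. Flowing from $M_2$ into $W$ along $v/v(f)$ gives, for some small $\delta>0$, an embedding $c\colon M_2\times(1-\delta,1]\hookrightarrow W$ with $f\circ c(p,t)=t$ and $c_*\partial_t=v/v(f)$; symmetrically $v'$ yields $c'\colon M_2\times[-1,-1+\delta')\hookrightarrow W'$ with $f'\circ c'(p,t)=t$. Because $v$ and $v'$ equal $\partial_t$ on the side collars, these flow collars restrict there to the standard product, so they are collars of $M_2$ compatible with the side collars in the sense of Definition~\ref{Def_CobCorner}. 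Gluing along them equips $W'\circ W=W\cup_{M_2}W'$ with a smooth structure, well defined up to a diffeomorphism preserving the side collars by \cite[Thm.~1.4]{Milnor_hcobordism} and \cite[Lemma~6.1]{MunkresDiffTop}; this is the asserted canonical smooth structure.

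Next I would define the Morse function directly through the reparametrization $r\colon[-2,2]\to[-1,1]$ appearing in the composition, choosing it (within its translation-near-the-boundary freedom) so that $r(0)=0$ and $r$ is affine near the shifted critical values, and setting $F:=r(f-1)$ on $W$ and $F:=r(f'+1)$ on $W'$. In the merged collar coordinate $u$ (equal to $f-1$ on $\operatorname{im} c$ and to $f'+1$ on $\operatorname{im} c'$) one has $F=r(u)$ on both sides, so $F$ is smooth across the seam $M_2$; on the side collars $F$ is the composite time coordinate, i.e. the projection; and near $M_1,M_3$, where $r$ is a translation, $F$ agrees with $f$ and $f'$. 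Thus $F$ is excellent with critical values in $(-1,0)$ and $(0,1)$. Affineness of $r$ near the critical values preserves the normal forms: if $f=f(p)+\|y\|^2-\|x\|^2$ with $v=\operatorname{grad} f$ and $r$ has slope $\lambda>0$ there, then $F=C+\lambda(\|y\|^2-\|x\|^2)$, and rescaling the Morse coordinates by $\sqrt\lambda$ restores standard form with $v=\operatorname{grad} F$. Concatenating the reparametrized tuples, with the repeated value $0$ kept once, yields $\underline B$ with at most one critical point per slab.

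Finally the vector fields glue: $v$ and $v'$, positively rescaled across the $M_2$-collar and along the side collars (by the factor $v(F)^{-1}$ there, so as to become $\partial_t$), assemble into a field $V$ which is gradient-like for $F$, equals $\partial_t$ on the side collars, is in standard form near the critical points, and satisfies $V(F)>0$ elsewhere. The only choices made---the collar widths $\delta,\delta'$, the admissible reparametrization $r$, and the interpolating rescaling of $V$---act on the vector field by positive rescaling, hence by isotopy through gradient-like fields, so $(F,V,\underline B)$ is canonical up to isotopy of $V$. The step I expect to be most delicate is precisely this corner bookkeeping: checking that the gradient-flow collars of $M_2$ glue to the side collars on the nose, and that the single reparametrization $r$ can be chosen to make $F$ simultaneously smooth at the seam, in Morse normal form at the critical points, and equal to the projection on the side collars---which is exactly where the verticality of $v$ and $v'$ on the side collars is used.
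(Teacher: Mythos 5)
Your proposal is correct and follows essentially the same route as the paper: gradient-flow collars of $M_2$ (compatible with the side collars because $v,v'$ are vertical there) define the smooth structure, $f-1$ and $f'+1$ glue and are reparametrized by $r$, and the vector fields are positively rescaled to agree with $\partial_t$ near the seam, with all choices absorbed into an isotopy of the gradient-like field. The only cosmetic difference is that you restore the Morse normal form by choosing $r$ affine near the critical values and rescaling coordinates, whereas the paper simply rescales the glued vector field by $r'(f(-))$.
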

\begin{proof}
    Since $f$ has no critical points near the boundary, flowing along $\frac{v}{v(f)}$ gives a collar for $M_2$ in $W$ in which $f$ is the projection on the $t$-coordinate. Similarly, flowing along $\frac{v'}{v'(f')}$ gives a collar for $M_2$ in $W'$ in which $f'$ is the projection on the $t$-coordinate. We may use these collars to define the smooth structure on $W'\circ W$, which ensures that $f-1$ and $f'+1$ glue into a smooth function on $W'\circ W$. The gradient-like vector fields are both vertical in the collars, but may not be normalized. We can renormalize them so that they agree strictly with $\partial t$ near $M_2$, and are unchanged away from its collars. Hence they glue into a smooth vector field. 
    
    The triple $(r((f-1)\cup(f'+1)), r'\cdot(v\cup v'), r((\underline{b}-1)\cup( \underline{b}'+1))$ is a Morse datum for $W'\circ W$, where $r:[-2,2]\to [-1,1]$ is the reparametrization used for composition in Definition \ref{Def_CobCorner}. 
\end{proof}
We get a version of \cite[Lem. 2.15]{Juhasz} for corners:
\begin{proposition}\label{Prop_CDtoMorse}
    Let $W$ be a collared $(n+2)$-cobordism and pick a parametrized Cerf decomposition of $W$. There exists a Morse datum for $W$ inducing this decomposition up to relations (1), (2) and (5).
\end{proposition}
\begin{proof}    
    We have seen that the elementary cobordisms of the form $W_d$ or $W(\S)$ carry Morse data. By Lemma \ref{Lem_functionsGlue}, any composition of elementary cobordisms also carries a Morse datum. 

    Our parametrized Cerf decomposition tells us that $W$ is diffeomorphic to a prescribed composition of elementary cobordisms, whose Morse datum transports to a Morse datum on $W$ under the diffeomorphism. By construction, this Morse datum induces the given decomposition of $W$.   
\end{proof}

\subsection{Existence and unicity of Morse data}
\begin{lemma}\label{Lem_ExistCD}
    Let $W$ be a collared $(n+2)$-cobordism. Then $W$ admits a Morse datum, and hence a parametrized Cerf decomposition. 
\end{lemma}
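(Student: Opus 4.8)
The plan is to construct the three pieces of a Morse datum --- an excellent Morse function $f$, a separating tuple $\underline{b}$, and a gradient-like vector field $v$ --- by the standard arguments of \cite{Milnor_hcobordism, GWW13, Juhasz}, taking care throughout that every construction is compatible with, and vertical along, the corners.

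First I would fix $f$ and $v$ near $\partial W$, where their values are essentially prescribed by Definition \ref{Def_gradientlike}. On the side collars $\Sigma_\pm \times [\pm1,\pm\frac{1}{2})\times[-1,1]\inj W$ I simply declare $f$ to be the projection onto the $[-1,1]$-coordinate $t$ and $v=\partial t$. Next, choosing a collar of $M_\pm$ in $W$ compatible with the side collars (which exists by regularity of the $\langle 2\rangle$-manifold structure), I set $f$ to be the collar coordinate near $f^{-1}(\pm1)=M_\pm$, taking the value $\pm1$ exactly on $M_\pm$ via the source and target diffeomorphisms, with $v$ the inward vertical field. On the overlap of these two collars, near the codimension-two corner, both prescriptions are projection onto the relevant coordinate and agree, so they glue; this is precisely where verticality on collars is used.

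I would then extend $f$ over all of $W$. A partition of unity yields a smooth $f\colon W\to[-1,1]$ agreeing with the boundary data and having no critical points in a neighborhood of $\partial W$ (the prescribed field satisfies $v(f)>0$ there). A generic perturbation supported strictly in the interior, away from all collars, makes the critical points non-degenerate since Morse functions are dense, and a further small perturbation separates their critical values, producing an excellent Morse function with all critical points away from the boundary and side collars. Choosing regular values interleaving the critical values and adjoining $b_0=-1$ and $b_{m+1}=1$ gives $\underline{b}$, so that each $f^{-1}([b_i,b_{i+1}])$ contains at most one critical point.

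Finally I would assemble $v$ globally. Near each critical point $p$ of index $k$ the Morse lemma supplies coordinates with $f=f(p)+||y||^2-||x||^2$, and I take $v=\operatorname{grad}f$ in those coordinates; on the collars of $\partial W$ I keep $v=\partial t$. Interpolating by a partition of unity subordinate to a cover by the Morse charts and the complement of $\operatorname{Crit}(f)$, then rescaling, gives a smooth field with $v(f)>0$ off $\operatorname{Crit}(f)$ and of the required standard form at each critical point, exactly as in \cite{Milnor_hcobordism}. The parametrized Cerf decomposition is then obtained by applying Lemma \ref{Lem_elementaryCob} to each slice, as in Proposition \ref{Prop_CDtoMorse}. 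The main obstacle is not the interior construction, which is classical, but enforcing consistency at the corners: one must arrange that $f$ is \emph{literally} the projection and $v$ \emph{literally} $\partial t$ on the side collars --- not merely up to isotopy --- so that the induced flow is defined away from $M_\pm$ and never carries interior points into the corner, which is what makes the resulting decomposition well-defined.
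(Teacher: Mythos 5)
Your proposal is correct and follows essentially the same route as the paper: both glue the prescribed boundary behaviour (projection onto $t$ on the side collars, a collar coordinate near $M_\pm$) to a generic interior function via a partition of unity to obtain an excellent Morse function, then build $v$ from Morse charts and $\partial t$ on the collars by another partition of unity, and finally choose $\underline{b}$ interleaving the critical values. The point you emphasize --- that $f$ and $v$ must agree \emph{literally} with the projection and $\partial t$ on the side collars --- is exactly the corner-compatibility the paper's proof is designed to enforce.
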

\begin{proof}
We follow \cite{Milnor_hcobordism}.
Choose a collar $M_\pm\times [\pm1,0) \subseteq W$, and an extended collar $\Sigma_\pm\times[-1,1]\times[\pm1,0)\subseteq  W$. One can cover $W$ by $(U_\Sigma,U_M,U_W)$ where $U_\Sigma = \Sigma_\pm\times[-1,1]\times[\pm1,0)$, $U_M = \big(M_\pm\times [\pm1,0)\big) \smallsetminus \big(\Sigma_\pm\times[-1,1]\times[\pm1,\pm\frac{1}{2}]\big) $ and $U_W= W \smallsetminus \big(M_\pm\times [\pm1,\pm\frac{1}{2}]\cup \Sigma_\pm\times[-1,1]\times[\pm1,\frac{1}{2}]\big)$. Choose a partition of unity $\rho_\Sigma,\rho_M,\rho_W$ subordinate to this cover. The projection on the $[-1,1]$-coordinate is a smooth function $f_\Sigma$ on $U_\Sigma$, and $\rho_\Sigma\cdot f_\Sigma$ extends to a smooth function on $W$ which is zero outside $U_\Sigma$. Similarly, the projection on the $[\pm 1,0)$-coordinate is a smooth function $f_M$ on $U_M$, and $\rho_M\cdot f_M$ extends to a smooth function on $W$ which is zero outside $U_M$.

For generic function $f_W:U_W\to (-\delta,\delta)$, $\delta>0$ sufficiently small, the function $$f= \rho_\Sigma f_\Sigma+ \rho_M f_M + \rho_W f_W$$ is excellent Morse and is the projection on the $t$-coordinate on the side collar. It has no critical points near $M_\pm=f^{-1}(\pm1)$ as both $f_\Sigma$ and $f_M$ must have positive derivative in the $t$ direction for any collar of $M_\pm$. 

It admits a gradient-like vector field $v$ by similar arguments. One covers $\overline{U_W}$ by coordinate systems $(U_i)_i$ on which $f$ is either a standard Morse singularity or is the projection on the first coordinate as in \cite[Lemma 3.2]{Milnor_hcobordism}. One consider the vector field $v_i$ given the gradient of $f$ in this coordinate system, and $\partial_t$ on $U_\Sigma$. One chooses a partition of unity $\rho_\Sigma, (\rho_i)_i$ of $W$ subordinate to the cover $U_\Sigma, (U_i\cap U_W)_i$ and sets $v = \rho_\Sigma \partial_t + \sum_i \rho_i v_i$.

By choosing at least one point $b_i$ in between each critical value of $f$, we have obtained a Morse datum $(f,v,\underline{b})$.
\end{proof}

\begin{proposition}\label{Prop_FamillyMorse}
    Let $(f,v,\underline{b})$ and $(f',v',\underline{b}')$ be two Morse data on a collared $(n+2)$-cobordism $W$. Suppose there exists a smooth path of function $(f_\lambda)_{\lambda\in[0,1]}$ between $f = f_0$ and $f'=f_1$ where each $f_\lambda$ is an excellent Morse function on $W$, and smooth paths  $(v_\lambda)_\lambda$ of gradient-like vector fields and $(\underline{b}_\lambda)_\lambda$ of regular values. Then the parametrized Cerf decompositions induced by $(f,v,\underline{b})$ and $(f',v',\underline{b}')$ are related by relations (1), (2) and (5).
\end{proposition}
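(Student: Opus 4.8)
The plan is to show that, under the stated hypotheses, the path $\lambda\mapsto (f_\lambda, v_\lambda, \underline{b}_\lambda)$ is combinatorially rigid: no critical point is born or dies, no two critical values cross, and no regular value $b_j$ crosses a critical value. The only effect of the path is therefore an isotopy of the entire decomposition, and Lemma \ref{Lem_elementaryCob} guarantees that such an isotopy changes each elementary piece only up to relations (1), (2) and (5). A conceptual point worth emphasising is that relations (3) and (4), which encode handle slides and handle births and deaths, are never needed here, precisely because the assumption that every $f_\lambda$ is excellent Morse forbids the corresponding bifurcations.

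First I would establish the combinatorial rigidity. Since each $f_\lambda$ is Morse, all its critical points are nondegenerate, so applying the implicit function theorem to $df_\lambda = 0$ shows that the critical points depend smoothly on $\lambda$ and trace out disjoint smooth arcs $p_i(\lambda)$; by compactness of $[0,1]$ their number and their indices are constant. Because each $f_\lambda$ is excellent, the critical values $c_i(\lambda) = f_\lambda(p_i(\lambda))$ are smooth and pairwise distinct for every $\lambda$, so their ordering is preserved along the whole path. Similarly, each $b_j(\lambda)$ is smooth and always a regular value, hence never meets any $c_i(\lambda)$; combined with the constraint that each slab $f_\lambda^{-1}([b_j(\lambda), b_{j+1}(\lambda)])$ contains at most one critical point, this forces the assignment of critical points to slabs to be constant. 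Thus the number of elementary pieces, and the index of the handle (if any) in each piece, do not change along the path.

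Next I would control each elementary piece. For a fixed slab index $j$, the family $W_j(\lambda) = f_\lambda^{-1}([b_j(\lambda), b_{j+1}(\lambda)])$ with its restricted Morse data is a smooth family of elementary cobordisms. Lemma \ref{Lem_elementaryCob} produces a decomposition $W_j(\lambda)\simeq W_{d_j(\lambda)}\circ W(\S_j(\lambda))$ from the choice of a coordinate chart around the critical point and a scale $\varepsilon$; since these choices can be made to vary smoothly in $\lambda$, the diffeomorphism $d_j(\lambda)$ and the framed sphere $\S_j(\lambda)$ vary by isotopy, up to the reversal $\S\leftrightarrow\overline{\S}$ coming from the two components of the space of coordinate systems when $1\le k\le n+1$. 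Simultaneously the level sets $M_j(\lambda) = f_\lambda^{-1}(b_j(\lambda))$ form a smooth isotopy of embedded $(n+1)$-cobordisms; flowing along $v_\lambda/v_\lambda(f_\lambda)$ provides canonical diffeomorphisms identifying $M_j(0)$ with $M_j(\lambda)$, under which $\S_j(0)$ and $\S_j(\lambda)$ become ambient-isotopic inside a single manifold.

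Finally I would assemble these local statements into relations. Using compactness of $[0,1]$ I would subdivide it into finitely many subintervals on which the coordinate charts and the isotopies are small enough that the change of decomposition is realised by a single move on each slab: moving the framed sphere $\S_j$ by the ambient isotopy of its level set is exactly relation (2), the induced change of the connecting diffeomorphisms $d_j$ is absorbed by relation (1), and the orientation ambiguity of the attaching sphere is relation (5). Concatenating these moves over the subintervals relates the two endpoint decompositions by relations (1), (2) and (5). The step I expect to be the main obstacle is the bookkeeping in this last paragraph: because the objects of $\operatorname{CD}_{n+2}^{\Sigma_-,\Sigma_+}$ are the level-set cobordisms themselves, one must carefully identify $\S_j(0)$ with $\S_j(1)$ through the flow-induced diffeomorphisms and verify that the transport is implemented by relation (2) followed by relation (1), with no hidden appeal to relations (3) or (4).
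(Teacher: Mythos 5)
Your argument is in essence the same as the paper's: both turn the path of Morse data into an isotopy carrying the level sets, attaching spheres and connecting diffeomorphisms of one decomposition onto those of the other, and then record that such a transport is implemented by relations (1), (2) and (5). The paper packages the isotopy as a single diffeomorphism $\psi\colon W\to W$ obtained by integrating a vector field on $[0,1]\times W$ as in \cite[Lemma 3.1]{GWW13}, and cites \cite[p.~27]{Juhasz} for the bookkeeping you carry out by hand in your last paragraph; your observation that excellence of every $f_\lambda$ forbids the bifurcations that would require relations (3) and (4) is exactly the right conceptual point, and the combinatorial-rigidity step is correct.

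What is missing is the one verification that makes this proposition nontrivial in the present setting: every diffeomorphism your construction produces --- the transports $\tau_j\colon M_j(0)\to M_j(\lambda)$ of the level sets and hence the modified $d_j(\lambda)$ --- must preserve the side collars up to reparametrization of the $[-1,1]$-coordinate, because the generators $e_d$ of $\operatorname{CD}_{n+2}^{\Sigma_-,\Sigma_+}$ are only defined for collar-preserving diffeomorphisms. This does hold: on $\Sigma_\pm\times[\pm1,\pm\frac{1}{2})\times[-1,1]$ every $f_\lambda$ is the projection to the $t$-coordinate and every $v_\lambda$ equals $\partial_t$, so all flows involved move points only in the $t$-direction, uniformly in the other coordinates. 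But it has to be said, and it is the check to which the paper devotes most of its proof. A smaller imprecision: flowing along $v_\lambda/v_\lambda(f_\lambda)$ for a fixed $\lambda$ identifies level sets of that same $f_\lambda$; to identify $M_j(0)=f_0^{-1}(b_j(0))$ with $M_j(\lambda)=f_\lambda^{-1}(b_j(\lambda))$ you must flow each point $x$ of $M_j(0)$ for the point-dependent time $b_j(\lambda)-f_\lambda(x)$, and this is a well-defined diffeomorphism only for $\lambda$ in an interval small enough that no critical point of $f_\lambda$ interferes --- your subdivision of $[0,1]$ repairs this, but the identification should be set up correctly before it is used.
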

\begin{proof}
By \cite[Lemma 3.1]{GWW13} there exists a diffeormorphism $\psi:W\to W$ intertwining the parametrized Cerf decompositions induced by $(f_0,v_0, \underline{b}_0)$ and $(f_1,v_1, \underline{b}_1)$. We recall their construction as we need to check that this diffeomorphism preserves collars up to reparametrization. 

Each Morse datum $(f_\lambda,v_\lambda, \underline{b}_\lambda)$ induces a bicollar $M_{i, \lambda}    \times [f_\lambda(b_{i, \lambda})-\varepsilon,f_\lambda(b_{i, \lambda})+\varepsilon]\inj W$ of every level set $M_{i, \lambda}:= f^{-1}(b_i)$ by flowing along $\frac{v_\lambda}{v_\lambda(f_\lambda)}$. These bicollars vary smoothly in $\lambda$. Note that as $v_\lambda=\partial t$ on the side collar, these bicollars are compatible with the side collars. Let $h: \Rr\to \Rr$ be a smooth bump function which is constant equal to 1 near 0 and has support in $[-\varepsilon,\varepsilon]$ and define $h_{i, \lambda}:W\to \Rr$ with $h_{i, \lambda}(x,t) := h(t-f_\lambda(b_{i, \lambda}))$ for $(x,t)\in M_{i, \lambda} \times [f_\lambda(b_{i, \lambda})-\varepsilon,f_\lambda(b_{i, \lambda})+\varepsilon]$ and 0 elsewhere.

We consider the vector field $V$ on $[0,1]\times W$ defined by 
$$V(\lambda,x) = \bigg(\partial_\lambda, h_{i, \lambda}(\lambda,x)\big(\partial_\lambda b_{i, \lambda} - \partial_\lambda f_\lambda(x)\big) \frac{v_\lambda}{v_\lambda(f_\lambda)}\bigg)$$
The diffeomorphism $\psi:W\to W$ is given by flowing along $V$ from a point $(0,x)$ to a point $(1, \psi(x))$.

On a point $x = (p,t,s) \in \Sigma_\pm\times [-1,1]\times [\pm1, \pm\frac{1}{2})$, the functions $h_{i, \lambda}(x)$ and $f_\lambda(x)$ depend only on $t$, and the vector field $v_\lambda$ is vertical. Hence, $\psi(p,t,s) = (p, \varphi(t),s)$ affects only the $t$-coordinate, uniformly in $p$ and $s$, and $\psi$ preserves the side collars up to reparametrization.

By \cite[Lemma 3.1 (b)]{GWW13} the Morse datum $(f',v',\underline{b}')$ is obtained, up to isotopy of $v'$, by transporting $(f,v,\underline{b})$ under the diffeomorphism $\psi$ with reparametrization $\varphi$. Note that they alter the vector field $V$ and the Morse data to obtain this, but it is again straightforward that this alteration preserves collars up to reparametrization. It is checked that the induced parametrized Cerf decompositions are related by relations (1), (2) and (5) in \cite[page 27]{Juhasz}. 
\end{proof}

\begin{proposition}\label{Prop_UniquenessCD}
    Let $W$ be a collared $(n+2)$-cobordism. Then any two parametrized Cerf decompositions of $W$ are related by relations (1)--(5).
\end{proposition}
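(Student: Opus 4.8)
The plan is to reduce the statement to the results already assembled in this section, following the strategy of \cite[Sec. 2]{Juhasz} adapted to the collared corner setting. Given a collared $(n+2)$-cobordism $W$ together with two parametrized Cerf decompositions $\DD_0$ and $\DD_1$, Proposition \ref{Prop_CDtoMorse} produces Morse data $(f_0,v_0,\underline{b}_0)$ and $(f_1,v_1,\underline{b}_1)$ inducing $\DD_0$ and $\DD_1$ respectively, up to relations (1), (2) and (5). It therefore suffices to show that the parametrized Cerf decompositions induced by \emph{any} two Morse data on $W$ are related by relations (1)--(5). Proposition \ref{Prop_FamillyMorse} already handles the case where the two Morse data are connected by a smooth path \emph{through excellent Morse functions}, giving only relations (1), (2) and (5). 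So the crux is to pass from an arbitrary pair of Morse data to such a path, at the cost of the remaining relations (3) and (4).

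The key input is Cerf theory, exactly as in \cite{Cerf, Juhasz, GWW13}: any two excellent Morse functions $f_0,f_1$ (with gradient-like vector fields) on a compact manifold are connected by a generic smooth path $(f_\lambda)_\lambda$ of functions. Such a generic path is excellent Morse for all but finitely many $\lambda$, where exactly one of two codimension-one degeneracies occurs: a birth/death of a cancelling pair of critical points, or a crossing of two critical values (an independent trajectory / handle slide). First I would invoke the corner-adapted existence statement already used in Lemma \ref{Lem_ExistCD} and Proposition \ref{Prop_FamillyMorse} to guarantee that this generic path can be taken \emph{relative to the side collars}, i.e. with every $f_\lambda$ equal to the $[-1,1]$-projection on the side collars and every $v_\lambda$ vertical there. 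This is the only place the corners enter, and it is precisely the point that was arranged throughout the section by requiring gradient-like vector fields to be vertical in collars, so that no interior flow line ever reaches the corner locus and the standard Cerf arguments apply verbatim in the interior. Between two consecutive degenerate values the path is a smooth family of excellent Morse functions, so Proposition \ref{Prop_FamillyMorse} relates the induced decompositions by (1), (2) and (5).

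It then remains to analyze what happens across each of the two types of codimension-one singularity, and to identify the corresponding modification of the parametrized Cerf decomposition with relations (3) and (4). At a critical-value crossing, two handles of indices, say $k$ and $k'$, that are attached along disjoint spheres exchange their order; when their attaching and belt spheres are disjoint this is exactly relation (3), and the handle-slide subtlety is absorbed into the ambient isotopy so that no new data is needed. At a birth/death point, a pair of handles of consecutive indices $k$ and $k+1$ is created or cancelled, with the belt sphere of the lower handle meeting the attaching sphere of the upper handle transversely in a single point; this is precisely the hypothesis of relation (4), whose conclusion $e_{M(\S),\S'}\circ e_{M,\S}\sim e_\varphi$ records the cancellation. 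Here I would again note that because the birth/death is a local model supported away from the side collars, the induced diffeomorphism $\varphi$ preserves collars and the identification is valid in $\operatorname{CD}_{n+2}^{\Sigma_-,\Sigma_+}$. Concatenating the relations obtained on each subinterval and at each crossing yields a chain of relations (1)--(5) from $\DD_0$ to $\DD_1$, which is the claim.

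I expect the main obstacle to be bookkeeping rather than conceptual: one must verify that the generic Cerf path, the local birth/death and crossing models, and the induced diffeomorphisms can \emph{all} be taken compatibly with the side collars and the source/target diffeomorphisms, so that every intermediate object is a genuine morphism in $\cob_{n+2}^{\Sigma_-,\Sigma_+}$ and every relation used is one of (1)--(5) rather than some auxiliary manipulation. As in the rest of the section, this is guaranteed by the verticality of the gradient-like vector fields in collars, which keeps the entire Cerf-theoretic analysis confined to the interior where \cite{Cerf, Juhasz, GWW13} apply unchanged; making this reduction precise, and citing the corresponding statements of \cite{Juhasz} for the identification of the two singularity types with relations (3) and (4), is the bulk of the work.
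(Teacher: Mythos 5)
Your proposal is correct and follows essentially the same route as the paper: reduce to Morse data via Proposition \ref{Prop_CDtoMorse}, join them by a generic Cerf path that is perturbed only away from the side collars (so every $f_\lambda$ remains the $[-1,1]$-projection there), apply Proposition \ref{Prop_FamillyMorse} between singular times, and identify critical-value crossings and birth/death points with relations (3) and (4) via the corresponding lemmas of Juh\'asz. The only detail you gloss over, which the paper treats separately via \cite[Lem.~2.21 and 2.22]{Juhasz}, is the case where the Morse functions agree but the gradient-like vector fields or the tuples $\underline{b}$ of regular values differ.
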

\begin{proof}
First, by Proposition \ref{Prop_CDtoMorse}, any parametrized Cerf decomposition is induced by a Morse datum, up to relations (1), (2), (5), hence it is enough to consider moves between Morse data. 

Suppose that $(f,v,\underline{b})$ and $(f',v',\underline{b}')$ are two Morse data on $W$. If $f=f'$, then $v$ and $v'$ are isotopic, see \cite[Def 2.10]{Juhasz}. If $f=f'$ and $v=v'$, then $(f,v,\underline{b}\cup \underline{b}')$ is a Morse datum and $(f,v,\underline{b})$ and $(f',v',\underline{b}')$ can both be obtained from it by removing points of $\underline{b}\cup \underline{b}'$. In both cases, this does not affect the induced parametrized Cerf decomposition up to relations (1), (2), (5), see \cite[Lem. 2.21 and 2.22]{Juhasz}.


If $f \neq f'$, there exists a smooth path $\overline f_\lambda=\lambda f+(1-\lambda )f',\ \lambda\in[0,1]$ between these two functions. This path may have very degenerate singularities in general, however note that it is still the projection on the $t$-coordinate on the side collar and has no critical points near $M_\pm=\overline f_\lambda^{-1}(\pm1)$. By compactness, all $\overline f_\lambda$'s have no critical point on $U_\partial= \big(M_\pm\times [\pm1,\pm1\mp 2\varepsilon)\big)\cup\big( \Sigma_\pm\times[-1,1]\times[\pm1,\frac{1}{2}\mp 2\varepsilon)\big)$ for some $\varepsilon>0$. Set $U_{in} = W \smallsetminus \big(M_\pm\times [\pm1,\pm1\mp \varepsilon]\cup \Sigma_\pm\times[-1,1]\times[\pm1,\frac{1}{2}\mp \varepsilon]\big)$ and choose a partition of unity $\rho_\partial,\rho_{in}$ adapted to the cover $(U_\partial, U_{in})$. By Cerf theory, for generic paths of functions $f_{in,\lambda}:U_{in}\to  (-\delta,\delta)$, $\delta>0$ sufficiently small, the path $$f_\lambda = \rho_{in} f_{in,\lambda} + \overline f_\lambda$$ is a path of excellent Morse function for $W$ except at finitely many times $\lambda_i\in (0,1)$ where it has either a birth/death singularity or a critical value crossing.
If there are no singularities, i.e. $(f_\lambda)_\lambda$ consists only of excellent Morse functions, then we can extend $\underline{b}$ into a path of regular values $\underline{b}_\lambda$ and $v$ to a path of gradient-like vector fields $v_\lambda$.

As in \cite[Thm. 2.24]{Juhasz}, one can modify this path slightly in a neighborhood of the singularity (hence away from the boundary) so that $(f_\lambda)_{\lambda\in (\lambda_i-\varepsilon,\lambda_i+\varepsilon)}$ is a standard ``chemin élémentaire de mort" \cite[p.71, Prop. 2]{Cerf} or a standard ``chemin élémentaire de 1-croisement" \cite[p.49, Prop. 2]{Cerf}. In particular, it is constant near the boundary and side collars. The parametrized Cerf decompositions induced by $f_{\lambda_i-\varepsilon}$ and $f_{\lambda_i+\varepsilon}$ are related by relations (1)--(5) by the arguments of \cite[Lem. 2.19 and 2.20]{Juhasz}. Finally, the parametrized Cerf decompositions induced by $f_{\lambda_i+\varepsilon}$ and $f_{\lambda_{i+1}-\varepsilon}$ are related by relations (1), (2) and (5) by Proposition \ref{Prop_FamillyMorse}.
\end{proof}

\begin{proof}[Proof of Theorem \ref{Thm_JuhaszCorner}]
The proof of \cite[Thm. 1.7]{Juhasz} now applies similarly. There is a functor $\operatorname{CD}_{n+2}^{\Sigma_-,\Sigma_+} \to \cob_{n+2}^{\Sigma_-,\Sigma_+}$ which is the identity on objects, maps $e_d$ to $W_d$ and $e_{M,\S}$ to $W(\S)$. It is well-defined as relations (1)--(5) do hold on the cobordisms. It is essentially surjective as it is the identity on objects. It is full by Lemma \ref{Lem_ExistCD} and faithful by Proposition \ref{Prop_UniquenessCD}.

In the non-compact case, by the argument of \cite[Thm 2.24]{Juhasz}, \cite{KirbyLinksS3} one can suppose that the path $(f_\lambda)_\lambda$ in Proposition \ref{Prop_UniquenessCD} does not introduce $(n+2)$-handles.
\end{proof}

\section{Inducing a 2-functor from handle attachment data}\label{Sec_2Fun}
We now finish the proof of Theorem \ref{Thm_ETQFTfromHandle}. We suppose we are given a categorified TQFT $\ZZ^\varepsilon$ and 2-morphisms $\big(Z_k: \ZZ^\varepsilon(S^{k-1}\times \D^{n+2-k})\to \ZZ^\varepsilon(\D^k\times S^{n+1-k})\big)_{k\in \{0,\dots,n+2\}}$ (resp. $k\in \{0,\dots,n+1\}$ in the non-compact case) satisfying (a) and (b).
We want to build the $(n+1+1)$-TQFT $$\ZZ: \Cob_{n+1+1}\to \CC \quad \text{  (resp. }\ZZ: \Cob_{n+1+1}^{nc}\to \CC \text{  in the non-compact case).}$$ 

Let us summarize the strategy in a few words.
A 2-functor $\ZZ$ is the data of an assignment on objects, which is already given by $\ZZ^\varepsilon$, and a functor $\ZZ^{\Sigma_-,\Sigma_+} :\cob_{n+2}^{\Sigma_-,\Sigma_+} \to \Hom_\CC(\ZZ^\varepsilon(\Sigma_-),\ZZ^\varepsilon(\Sigma_+))$ for every $\Sigma_-,\Sigma_+$. We will define these using Theorem \ref{Thm_JuhaszCorner}, with $F_{M,\S} = Z(\S)$. We then need to check that these, together with the coherence data of $\ZZ^\varepsilon$, assemble into a 2-functor.

\begin{definition}
    Let $\Sigma_-,\Sigma_+$ be $n$-manifolds. Denote by $$\ZZ^\varepsilon_{\Sigma_-,\Sigma_+}: \cob_{(n+1)+\varepsilon}^{\Sigma_-,\Sigma_+} \to \CC^{\Sigma_-,\Sigma_+}:=\Hom_\CC(\ZZ^\varepsilon(\Sigma_-),\ZZ^\varepsilon(\Sigma_+))$$ the functor induced by $\ZZ^\varepsilon$ on the Hom-spaces. For every framed sphere $\S\subseteq M$, denote $$F_{M,\S}^{\Sigma_-,\Sigma_+}:= Z(\S): \ZZ^\varepsilon_{\Sigma_-,\Sigma_+}(M) \to \ZZ^\varepsilon_{\Sigma_-,\Sigma_+}(M(\S))$$
    where $Z(\S)$ has been defined in Definition \ref{Def_Z(S)}.
\end{definition}
\begin{proposition}\label{Prop_ZonHoms}
    The morphisms $F_{M,\S}^{\Sigma_-,\Sigma_+}$ satisfy relations (2)--(5), hence there exists a unique functor 
    $$\ZZ^{\Sigma_-,\Sigma_+}: \cob_{n+2}^{\Sigma_-,\Sigma_+} \to \CC^{\Sigma_-,\Sigma_+}  \quad\quad (\text{resp. }\ZZ^{\Sigma_-,\Sigma_+}: \cob_{n+2}^{\Sigma_-,\Sigma_+, nc} \to \CC^{\Sigma_-,\Sigma_+})$$
    extending $\ZZ^\varepsilon_{\Sigma_-,\Sigma_+}$ and such that $\ZZ^{\Sigma_-,\Sigma_+}(W(\S))=F_{M,\S}^{\Sigma_-,\Sigma_+}$.
\end{proposition}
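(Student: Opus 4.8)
The plan is to reduce everything to Corollary \ref{Cor_Juhasz}: once we check that the morphisms $F_{M,\S}^{\Sigma_-,\Sigma_+}=Z(\S)$ satisfy relations (2)--(5), the existence and uniqueness of the extending functor $\ZZ^{\Sigma_-,\Sigma_+}$ is immediate, and the non-compact statement follows identically, using only the handles retained in the non-compact Cerf category and the relations not involving the discarded generators. So the whole content is the verification of the four relations, which I would carry out one at a time, invoking hypothesis (a) for relation (4) and hypothesis (b) for relation (5).

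Relations (2) and (3) I expect to be formal consequences of the construction of $Z(\S)$ together with the functoriality of $\ZZ^\varepsilon$ and the axioms of $\CC$. For (2), both $Z(\S)$ and $Z(d\circ\S)$ are of the shape $(\id\otimes Z_k)\circ_h\ZZ^\varepsilon(M_{\smallsetminus\S})$; a diffeomorphism $d:M\to M'$ restricts to a diffeomorphism of complements carrying $\S$ to $d\circ\S$, so applying $\ZZ^\varepsilon$ to this restriction and using naturality of horizontal composition yields $Z(d\circ\S)\circ\ZZ^\varepsilon(W_d)=\ZZ^\varepsilon(W_{d^\S})\circ Z(\S)$, which is relation (2). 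For (3), when $\S$ and $\S'$ are disjoint I would remove both at once, writing $M_{\smallsetminus(\S\sqcup\S')}$ with two independent belt regions; then $Z(\S)$ and $Z(\S')$ are horizontal compositions with the respective handle data placed in disjoint tensor factors, and the interchange law (middle-four exchange) in the bicategory $\CC$ forces the two orders of attachment to agree.

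Relation (5) is immediate: Definition \ref{Def_iotaInvariant} records that $\iota$-invariance of $Z_k$ is by definition equivalent to $Z(\S)=Z(\overline{\S})$ for every framed $(k-1)$-sphere $\S\subseteq M$, which is precisely relation (5). As we assumed hypothesis (b), this holds for all $k\in\{1,\dots,n+1\}$.

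The main obstacle is relation (4). Hypothesis (a) only provides the cancellation in the standard local model, namely the ball $B$ of Definition \ref{Def_TheHdlCancel} equipped with its canonical pair $\S_B,\S_B'$, where Definition \ref{Def_SatisfyHdlCancel} gives $Z(\S_B')\circ Z(\S_B)=\ZZ^\varepsilon(W_{\varphi_B})$. For a general $M$ in which $a(\S')$ and $b(\S)$ meet transversely exactly once, I would first show, following \cite[Def. 2.17]{Juhasz}, that a neighbourhood of $b(\S)\cup a(\S')$ is diffeomorphic to the local model, so that there is an embedded ball realizing $B\inj M$ in which $\S$ is identified with $\S_B$ and, after surgery, $\S'$ with $\S_B'$, the surgeries being supported entirely inside this ball. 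Then I would use relations (2) and (3) to factor the global composite through the complement: since nothing happens outside the ball, $Z(\S)$ and $Z(\S')$ are the local handle data tensored with the identity on $\ZZ^\varepsilon$ of the complement, and the interchange law lets me isolate the local composite. Applying hypothesis (a) inside the ball and transporting along the induced global diffeomorphism $\varphi:M\simeq M(\S)(\S')$ then yields $Z(\S')\circ Z(\S)=\ZZ^\varepsilon(W_\varphi)$, which is relation (4). The delicate points are exactly the identification of the local model (that the embedded $B$ exists and realizes $\S_B,\S_B'$) and the bookkeeping showing that the complement contributes only an identity, so that the local cancellation of hypothesis (a) upgrades to the global relation.
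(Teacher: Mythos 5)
Your overall strategy coincides with the paper's: reduce to Corollary \ref{Cor_Juhasz}, verify (2) and (3) by decomposing $M$ along the framed sphere(s) and using the interchange law, get (5) directly from the definition of $\iota$-invariance, and prove (4) by localizing to the standard ball $B$ of Definition \ref{Def_TheHdlCancel} and applying hypothesis (a) there. The verifications of (2), (3) and (5) are exactly as in the paper.

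There is, however, one concrete omission in your treatment of relation (4). Relation (4) is stated for any pair $\S,\S'$ with $a(\S')$ and $b(\S)$ intersecting transversely in exactly \emph{one} point, with no constraint on the sign of that intersection, whereas hypothesis (a) only provides cancellation for the standard local model $(\S_B,\S_B')$, which realizes one fixed sign, say $a(\S_B')\pitchfork b(\S_B)=+1$. When you identify a neighbourhood of $\im(\S)\cup(\im(\S')\cap M)$ with $B$, the pair $(\S,\S')$ is carried to $(\S_B,\S_B')$ only if the algebraic intersection is $+1$; if it is $-1$, you land on $(\S_B,\overline{\S_B'})$ (or $(\overline{\S_B},\S_B')$), and hypothesis (a) does not directly apply. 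The paper resolves this by first using relation (5), i.e.\ hypothesis (b), to replace $\overline{\S_B'}$ by $\S_B'$ (reversing $\S$ when $k\neq 0$ or $\S'$ when $k+1\neq n+2$, which is always possible for $n\geq 0$), and only then invoking the cancellation. Without this step your argument only establishes relation (4) for positive intersections, so you should add the reduction of the negative case to the positive one via $\iota$-invariance. This is also the conceptual reason hypothesis (b) appears in the main theorem at all, as the paper's closing remark on the equivalence of (a)$+$(b) with (a)$+$(a') makes explicit.
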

\begin{proof} The conclusion follows from Corollary \ref{Cor_Juhasz} once we checked relations (2)--(5). We will drop the superscripts $F^{\Sigma_-,\Sigma_+}$ in the proof and denote $F_d := \ZZ^\varepsilon_{\Sigma_-,\Sigma_+}(d)$ for a diffeomorphism $d$.
    \paragraph{Relation (2):} We need to check that $F_{M', \S'}\circ F_d = F_{d^\S}\circ F_{M,\S}$ for $d:M\to M'$ and $\S\subseteq M$, where $d^\S: M(\S) \to M'(d\circ \S)$ is the diffeomorphism induced by $d$ and $\S'=d\circ \S$. Remember from Definition \ref{Def_hdlAttachment} that $\S$ induces a decomposition 
    $$M \simeq (\id_{\Sigma_+}\sqcup\  S^{k-1}\times \D^{n+2-k}) \circ M_{\smallsetminus \S}\text{  and  } M(\S) := (\id_{\Sigma_+}\sqcup\  \D^{k}\times S^{n+1-k}) \circ M_{\smallsetminus \S}$$
    and $\S'$ induces a decomposition
    $$M' \simeq (\id_{\Sigma_+}\sqcup\  S^{k-1}\times \D^{n+2-k}) \circ M'_{\smallsetminus \S'}\text{  and  } M'(\S) := (\id_{\Sigma_+}\sqcup\  \D^{k}\times S^{n+1-k}) \circ M'_{\smallsetminus \S'}$$
    By construction, the diffeomorphism $d$ restricts to a diffeomorphism $d_{\smallsetminus \S}:M_{\smallsetminus \S}\to M'_{\smallsetminus \S'}$. In this decomposition, $d$ can be rewritten as $(\id\sqcup \id_{S^{k-1}\times \D^{n+2-k}})\circ d_{\smallsetminus \S}$. The induced diffeomorphism on surgered manifolds is $d^\S = (\id\sqcup \id_{\D^{k}\times S^{n+1-k}})\circ d_{\smallsetminus \S}:M(\S)\to M'(\S')$.
    Equation (2) now follows from
    \begin{equation*}
        \begin{tikzcd}
        \Sigma_- \ar[r, "M_{\smallsetminus \S}",""{name=M1,below}] \ar[d,equal] & \Sigma_+\sqcup S^{k-1}\hskip-2pt\times\hskip-2pt S^{n+1-k}\hskip-4pt\ar[d,equal] \ar[rr, " \id\sqcup S\times \D",""{name=H1,below}]& &\Sigma_+\ar[d,equal] \\
        \Sigma_- \ar[r, "M'_{\smallsetminus \S'}",""{name=M2,below}]\ar[d,equal]  & \Sigma_+\sqcup S^{k-1}\hskip-2pt\times\hskip-2pt S^{n+1-k}\hskip-4pt\ar[d,equal]  \ar[rr, " \id\sqcup S\times \D",""{name=H2,below}]& &\Sigma_+ \ar[d,equal] \\
        \Sigma_- \ar[r, "M'_{\smallsetminus \S'}",""{name=M3,below}] & \Sigma_+\sqcup S^{k-1}\hskip-2pt\times\hskip-2pt S^{n+1-k}\hskip-4pt \ar[rr, " \id\sqcup \D\times S",""{name=H3,below}]& &\Sigma_+   
        \ar[from=M1,to=M2,Rightarrow,"d_{\smallsetminus \S}" near start, shorten > = 15]
        \ar[from=M2,to=M3,equal,"\id" near start, shorten > = 15]
        \ar[from=H1,to=H2,equal,"\id" near start, shorten > = 10]
        \ar[from=H2,to=H3,Rightarrow,"Z_k" near start, shorten > = 10]
    \end{tikzcd} = 
        \begin{tikzcd}
        \Sigma_- \ar[r, "M_{\smallsetminus \S}",""{name=M1,below}] \ar[d,equal] & \Sigma_+\sqcup S^{k-1}\hskip-2pt\times\hskip-2pt S^{n+1-k}\hskip-4pt\ar[d,equal] \ar[rr, " \id\sqcup S\times \D",""{name=H1,below}]& &\Sigma_+\ar[d,equal] \\
        \Sigma_- \ar[r, "M_{\smallsetminus \S}",""{name=M2,below}]\ar[d,equal]  & \Sigma_+\sqcup S^{k-1}\hskip-2pt\times\hskip-2pt S^{n+1-k}\hskip-4pt\ar[d,equal]  \ar[rr, " \id\sqcup S\times \D",""{name=H2,below}]& &\Sigma_+ \ar[d,equal] \\
        \Sigma_- \ar[r, "M'_{\smallsetminus \S'}",""{name=M3,below}] & \Sigma_+\sqcup S^{k-1}\hskip-2pt\times\hskip-2pt S^{n+1-k}\hskip-4pt \ar[rr, " \id\sqcup \D\times S",""{name=H3,below}]& &\Sigma_+   
        \ar[from=M1,to=M2,equal,"\id" near start, shorten > = 15]
        \ar[from=M2,to=M3,Rightarrow,"d_{\smallsetminus \S}" near start, shorten > = 15]
        \ar[from=H1,to=H2,Rightarrow,"Z_k" near start, shorten > = 10]
        \ar[from=H2,to=H3,equal,"\id" near start, shorten > = 10]
    \end{tikzcd}
    \end{equation*}
    where we have omitted writing $\ZZ^\varepsilon(-)$ to help fit in the page, and $S\times\D$ and $\D\times S$ respectively stand for $S^{k-1}\times \D^{n+2-k}$ and $\D^{k}\times S^{n+1-k}$.

    \paragraph{Relation (3):} We need to check that $F_{M(\S),\S'}\circ F_{M,\S}= F_{M(\S'),\S}\circ F_{M,\S'}$ whenever $\S,\S' \subseteq M$ are disjoint, so $\S'$ can be pushed to $M(\S)$ and $\S$ to $M(\S')$. The embedding $$\S\sqcup\S': S^{k-1}\times \D^{n+2-k} \sqcup\ S^{l-1}\times \D^{n+2-l} \inj M$$ induces a decomposition
    $$M \simeq (\id_{\Sigma_+}\sqcup\  S^{k-1}\times \D^{n+2-k}\sqcup\ S^{l-1}\times \D^{n+2-l}) \circ M_{\smallsetminus \S\cup\S'}$$
    Relation (3) follows from 
    \begin{equation*}
        \begin{tikzcd}
        \Sigma_- \ar[r, "M_{\smallsetminus \S\cup\S'}",""{name=M1,below}] \ar[d,equal] & \hskip-4pt\Sigma_+\sqcup \partial \S \sqcup \partial \S'\ar[d,equal] \ar[rr, " \id\sqcup S\times \D\sqcup S\times \D",""{near end, name=H1,below}]& &\Sigma_+\ar[d,equal] \\
        \Sigma_- \ar[r, "M_{\smallsetminus \S\cup\S'}",""{name=M2,below}]\ar[d,equal]  & \hskip-4pt\Sigma_+\sqcup \partial \S \sqcup \partial \S'\ar[d,equal]  \ar[rr, " \id\sqcup \D\times S\sqcup S\times \D",""{near end, name=H2,below}]& &\Sigma_+ \ar[d,equal] \\
        \Sigma_- \ar[r, "M_{\smallsetminus \S\cup\S'}",""{name=M3,below}] & \hskip-4pt\Sigma_+\sqcup \partial \S \sqcup \partial \S' \ar[rr, " \id\sqcup \D\times S\sqcup \D\times S",""{near end, name=H3,below}]& &\Sigma_+   
        \ar[from=M1,to=M2,equal,"\id" near start, shorten > = 10]
        \ar[from=M2,to=M3,equal,"\id" near start, shorten > = 10]
        \ar[from=H1,to=H2,Rightarrow,"\id\sqcup Z_k\sqcup \id"' near start, shorten > = 10]
        \ar[from=H2,to=H3,Rightarrow,"\id\sqcup\id\sqcup Z_l"' near start, shorten > = 10]
    \end{tikzcd} = 
    \begin{tikzcd}
        \Sigma_- \ar[r, "M_{\smallsetminus \S\cup\S'}",""{name=M1,below}] \ar[d,equal] & \hskip-4pt\Sigma_+\sqcup \partial \S \sqcup \partial \S'\ar[d,equal] \ar[rr, " \id\sqcup S\times \D\sqcup S\times \D",""{near end, name=H1,below}]& &\Sigma_+\ar[d,equal] \\
        \Sigma_- \ar[r, "M_{\smallsetminus \S\cup\S'}",""{name=M2,below}]\ar[d,equal]  & \hskip-4pt\Sigma_+\sqcup \partial \S \sqcup \partial \S'\ar[d,equal]  \ar[rr, " \id\sqcup S\times \D\sqcup \D\times S",""{near end, name=H2,below}]& &\Sigma_+ \ar[d,equal] \\
        \Sigma_- \ar[r, "M_{\smallsetminus \S\cup\S'}",""{name=M3,below}] & \hskip-4pt\Sigma_+\sqcup \partial \S \sqcup \partial \S' \ar[rr, " \id\sqcup \D\times S\sqcup \D\times S",""{near end, name=H3,below}]& &\Sigma_+   
        \ar[from=M1,to=M2,equal,"\id" near start, shorten > = 10]
        \ar[from=M2,to=M3,equal,"\id" near start, shorten > = 10]
        \ar[from=H1,to=H2,Rightarrow,"\id\sqcup \id\sqcup Z_l"' near start, shorten > = 10]
        \ar[from=H2,to=H3,Rightarrow,"\id\sqcup Z_k\sqcup \id"' near start, shorten > = 10]
    \end{tikzcd}
    \end{equation*}
        where again we have omitted writing $\ZZ^\varepsilon(-)$, wrote $\id\sqcup Z_k\sqcup \id$ when we mean $\id\otimes Z_k\otimes \id$ with inserted compatibility of $\ZZ^\varepsilon$ with the monoidal structure, and $\partial \S$ and $\partial \S'$ respectively stand for $S^{k-1}\times S^{n+1-k}$ and $S^{l-1}\times S^{n+1-l}$.

    \paragraph{Relation (4):} We need to check that $F_{M(\S),\S'}\circ F_{M,\S} = F_\varphi$ whenever $a(\S')$ and $b(\S)$ intersect transversely exactly once in $M(\S)$, and $\varphi$ is the induced diffeomorphism $M\simeq M(\S)(\S')$. 

    As in \cite[Def. 2.17]{Juhasz}, up to shrinking $\S$ and $\S'$ which does not affect $F_{M(\S),\S'}$ and $F_{M,\S}$ by relations (1) and (2), we can assume that $$\im(\S) \cup (\im(\S')\cap M) \simeq  \D^k\times S^{n+1-k} \underset{S^{k-1}\times \D^{n+1-k}}{\cup} \D^k\times \D^{n+1-k}\ .$$ Therefore a small neighborhood of this subset in $M$ is diffeomorphic to $B$ from Definition \ref{Def_TheHdlCancel} and decomposes $M$ as 
    $$M\simeq (\id_{\Sigma_+} \sqcup B) \circ M_{\smallsetminus B}$$
Moreover, $\S$ factors through $B$ and $\S'$ through $B(\S)$. If the algebraic count $a(\S')\pitchfork b(\S) = 1$, then they agree with the framed spheres $\S_B$ and $\S_B'$ described in Definition \ref{Def_TheHdlCancel}. However, if $a(\S')\pitchfork b(\S) = -1$ then they agree with $\S_B$ and $\overline{\S_B'}$ (or $\overline{\S_B}$ and $\S_B'$). We reduce to the first case using relation (5) below\footnote{We may use reversal of $\S$ for $k\neq 0$ or of $\S'$ for $k+1 \neq n+2$. This argument would fail for $n=-1$, which we do not consider.}.

The diffeomorphism $\varphi:M\to M(\S')(\S)$ is given by the diffeomorphism $\varphi_B:B\to B(\S')(\S)$ extended by the identity on $M_{\smallsetminus B}$.

We have $$F_{M(\S),\S'}\circ F_{M,\S} = (F_{B(\S_B),\S_B'}\circ F_{B,\S_B}) \circ_h \id_{M_{\smallsetminus B}}$$
which, as the $Z_k$'s satisfy the handle cancellation, implies
$$F_{M(\S),\S'}\circ F_{M,\S} = F_{\varphi_B} \circ_h \id_{M_{\smallsetminus B}} = F_\varphi \ .$$

    \paragraph{Relation (5):} We need to check that $F_{M,\S} = F_{M,\overline{\S}}$ where $\S$ is a framed $(k-1)$-sphere for some $1\leq k\leq n+1$ and $\overline{\S} = \S\circ \iota$. This follows from the definition of $Z_k$ being $\iota$-invariant.
\end{proof}
\begin{definition}
    Let $\ZZ:\Cob_{n+1+1}\to \CC$ (resp. $\Cob_{n+1+1}^{nc}\to \CC$) be the assignment \begin{itemize}
        \item $\ZZ(\Sigma) := \ZZ^\varepsilon(\Sigma)$ for $\Sigma$ an object of $\Cob_{n+1+1}$,
        \item $\ZZ^{\Sigma_-,\Sigma_+}: \Hom_{\Cob_{n+1+1}}(\Sigma_-,\Sigma_+) \to \Hom_\CC(\ZZ(\Sigma_-),\ZZ(\Sigma_+))$ defined by Proposition \ref{Prop_ZonHoms}, and
        \item Coherence structure $\phi_{M,M'}: \ZZ(M')\circ \ZZ(M) \simeq \ZZ(M'\circ M)$ and $\phi_{\Sigma}: \id_{\ZZ(\Sigma)} \simeq \ZZ(\id_\Sigma)$, and coherences for the symmetric monoidal structure, given by the coherence structure of $\ZZ^\varepsilon$.
    \end{itemize}
\end{definition}
\begin{proposition}\label{Prop_sm2fun}
    The assignment $\ZZ:\Cob_{n+1+1}\to \CC$ (resp. $\Cob_{n+1+1}^{nc}\to \CC$) is a symmetric monoidal 2-functor.
\end{proposition}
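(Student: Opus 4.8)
The plan is to check, one by one, the data and axioms of a symmetric monoidal 2-functor, exploiting the fact that everything except the action on 2-morphisms is \emph{literally inherited} from $\ZZ^\varepsilon$. Recall that a 2-functor consists of an assignment on objects, functors on Hom-categories, and invertible coherence 2-morphisms $\phi_{M,M'}\colon \ZZ(M')\circ\ZZ(M)\Rightarrow\ZZ(M'\circ M)$ and $\phi_\Sigma\colon\id_{\ZZ(\Sigma)}\Rightarrow\ZZ(\id_\Sigma)$ subject to associativity and unit axioms, and that a symmetric monoidal structure adds the monoidal interchanger and braiding data together with their coherence axioms. In our situation the action on objects and 1-morphisms, and all of the coherence 2-morphisms, are defined to be those of $\ZZ^\varepsilon$; the functoriality of each $\ZZ^{\Sigma_-,\Sigma_+}$ is Proposition \ref{Prop_ZonHoms}.

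First I would dispose of the coherence axioms. Each such axiom is an equation between pasting composites of the coherence 2-morphisms $\phi$ and of the associators, unitors and braidings of $\Cob_{n+1+1}$ and $\CC$. Since all the associators, unitors and braidings of $\Cob_{n+1+1}$ are induced by diffeomorphisms, they lie in the image of $\Cob_{n+1+\varepsilon}\to\Cob_{n+1+1}$, and $\ZZ$ sends them to the same 2-morphisms of $\CC$ as $\ZZ^\varepsilon$ does. Hence every coherence diagram for $\ZZ$ coincides with the corresponding diagram for $\ZZ^\varepsilon$, which commutes by hypothesis. The same argument handles the symmetric monoidal axioms.

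The genuine content is naturality of the coherence 2-morphisms with respect to 2-morphisms of $\Cob_{n+1+1}$: for the compositor this is the assertion that the evident square commutes for every pair of horizontally composable 2-morphisms, and similarly for the monoidal interchanger. Naturality squares paste together under both vertical and horizontal composition and hold trivially for identities, so by Theorem \ref{Thm_JuhaszCorner}, which presents every 2-morphism as a composite of generators $W_d$ and $W(\S)$, it suffices to verify naturality on these generators (and, using the interchange law $W'\circ_h W=(W'\circ_h\id)\circ(\id\circ_h W)$, with one factor an identity). For a cylinder $W_d$ one has $\ZZ(W_d)=\ZZ^\varepsilon(W_d)$, so naturality is part of $\ZZ^\varepsilon$ being a 2-functor.

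The one substantive computation, which I expect to be the main obstacle, is naturality on a handle attachment $W(\S)\colon M\to M(\S)$. The key geometric observation is that $\id_{M'}\circ_h W(\S)$ is itself the handle attachment on $M'\circ M$ along $\S$ pushed into the composite, and that $(M'\circ M)_{\smallsetminus\S}\simeq M'\circ M_{\smallsetminus\S}$ up to the monoidal interchanger (using $M'\circ(\id_{\Sigma_+}\sqcup S^{k-1}\times\D^{n+2-k})\simeq M'\sqcup S^{k-1}\times\D^{n+2-k}$). Since $Z(\S)$ is built in Definition \ref{Def_Z(S)} as $(\id\otimes Z_k)\circ_h\id_{\ZZ^\varepsilon(M_{\smallsetminus\S})}$ using precisely the compositor $\phi$ of $\ZZ^\varepsilon$, the naturality square reduces, after inserting the compatibility isomorphisms of $\ZZ^\varepsilon$, to an identity between two ways of whiskering $Z_k$ by $\ZZ^\varepsilon(M')$ and $\ZZ^\varepsilon(M_{\smallsetminus\S})$. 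The mirror case $W(\S)\circ_h\id$, and the naturality of the monoidal interchanger (which in the same way comes down to $Z(\S)$ being defined via $\otimes$ together with the monoidality of $\ZZ^\varepsilon$), are then handled identically. Once the identification $(M'\circ M)_{\smallsetminus\S}\simeq M'\circ M_{\smallsetminus\S}$ is in hand, the remaining verification is a diagram chase in $\CC$ using only the 2-functoriality and monoidality of $\ZZ^\varepsilon$.
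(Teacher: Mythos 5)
Your proposal is correct and follows essentially the same route as the paper: reduce the coherence axioms to those of $\ZZ^\varepsilon$ (since all structural 2-morphisms of $\Cob_{n+1+1}$ come from diffeomorphisms), then verify naturality of the compositor and interchanger only on the generators $W_d$ and $W(\S)$, where for $W(\S)$ the identification $(M'\circ M)_{\smallsetminus\S}\simeq M'\circ M_{\smallsetminus\S}$ and the definition of $Z(\S)$ as a whiskering of $Z_k$ reduce everything to compatibility of $\phi$ with associators and unitors. The paper carries out exactly this diagram chase explicitly.
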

\begin{proof}
    We need to check that the coherence data of $\ZZ^\varepsilon$ still satisfies the required coherence properties of a symmetric monoidal 2-functor with respect to the new assignments on 2-morphisms given by $\ZZ$. We follow the definitions from \cite[Def. A.5 and Def. 2.5] {SPPhD}. Most of the coherences of this data do not involve 2-morphisms and hold by assumption that $\ZZ^\varepsilon$ is a symmetric monoidal 2-functor. We are left to check the following.

For every pair of composeable 1-morphisms $\Sigma_1 \overset{M}{\to}\Sigma_2\overset{M'}{\to}\Sigma_3$, we are given an isomorphism 
$$\phi_{M,M'}: \ZZ(M')\circ_h \ZZ(M) \simeq \ZZ(M'\circ_h M)$$
which by assumption is natural with respect to diffeomorphisms of $M$ and $M'$ (i.e. in the 2-morphisms coming from $\Cob_{n+1+\varepsilon}$). We need to check that it is actually natural with respect to all 2-morphisms. It is enough to check it for 2-morphisms of the form $W(\S)$ as they generate along with diffeomorphisms.

Let $\S\subseteq M'$ be a framed $(k-1)$-sphere and $W(\S): M'\to M'(\S)$. Naturality of $\phi$ follows from the construction of $\ZZ(W(\S))$:
\begin{equation*}
\begin{tikzcd}
    \ZZ(\Sigma_1)\ar[d, equal] \ar[r, "\ZZ(M)"] 
        & \ZZ(\Sigma_2)\ar[r, "\ZZ(M')"] 
        & \ZZ(\Sigma_3)\ar[d, equal]
        \\
        \ZZ(\Sigma_1)\ar[d, equal] \ar[rr, "\ZZ(M'\circ M)"{name = MM, above}] 
        && \ZZ(\Sigma_3)\ar[d, equal]
        \\
        \ZZ(\Sigma_1) \ar[rr, "\ZZ(M'(\S)\circ M)"{name = MSM, above}] 
        && \ZZ(\Sigma_3)
        \arrow[from = 1-2, to = MM, Rightarrow, "\phi_{M,M'}"]
        \arrow[from = MM, to = MSM, Rightarrow, "\ZZ(W(\S)\circ_h \id_M)"{near end}, shorten < = 7]
\end{tikzcd}
=
    \begin{tikzcd}[column sep=45pt]
        \ZZ(\Sigma_1)\ar[d, equal] \ar[r, "\ZZ(M)"] 
        & \ZZ(\Sigma_2)\ar[r, "\ZZ(M')"] 
        & \ZZ(\Sigma_3)\ar[d, equal]
        \\
        \ZZ(\Sigma_1)\ar[d, equal] \ar[rr, "\ZZ(M'\circ M)"{name = MM, above}] 
        && \ZZ(\Sigma_3)\ar[d, equal]
        \\
        \ZZ(\Sigma_1)\ar[d, equal] \ar[r, "\ZZ(M'_{\smallsetminus \S}\circ M)"{name = MmS2, above}] 
        & \ZZ(\Sigma_2\sqcup S\hskip-3pt\times\hskip-3pt S)\ar[d, equal]\ar[r, "\ZZ(\id_{\Sigma'}\sqcup S\times \D)"{name = SD, above}] 
        & \ZZ(\Sigma_3)\ar[d, equal]
        \\
        \ZZ(\Sigma_1)\ar[d, equal] \ar[r, "\ZZ(M'_{\smallsetminus \S}\circ M)"{name = MmS1, above}] 
        & \ZZ(\Sigma_2\sqcup S\hskip-3pt\times\hskip-3pt S)\ar[r, "\ZZ(\id_{\Sigma'}\sqcup \D\times S)"{name = DS, above}] 
        & \ZZ(\Sigma_3)\ar[d, equal]
        \\
        \ZZ(\Sigma_1) \ar[rr, "\ZZ(M'(\S)\circ M)"{name = MSM, above}] 
        && \ZZ(\Sigma_3)
        \arrow[from = 1-2, to = MM, Rightarrow, "\phi"]
        \arrow[from = MM, to = 3-2, Rightarrow, "\phi^{-1}", shorten < = 7]
        \arrow[from = 4-2, to = MSM, Rightarrow, "\phi"]
        \arrow[from = SD, to = DS, Rightarrow, "\id\otimes Z_k"{near end}, shorten < = 7]
        \arrow[from = MmS1, to = MmS2, equal, shorten > = 7]
    \end{tikzcd}
\end{equation*}
which by compatibility of $\phi$ with associators and unitors is equal to
\begin{equation*}
    \begin{tikzcd}
        \ZZ(\Sigma_1)\ar[d, equal] \ar[r, "\ZZ(M)"{name = M1, above}] 
        & \ZZ(\Sigma_2)\ar[d, equal]\ar[rr, "\ZZ(M')"{name = M, above}] 
        && \ZZ(\Sigma_3)\ar[d, equal]
        \\
        \ZZ(\Sigma_1)\ar[d, equal]\ar[r, "\ZZ(M)"{name = M2, above}] 
        & \ZZ(\Sigma_2)\ar[d, equal] \ar[r, "\ZZ(M'_{\smallsetminus \S})"{name = MmS2, above}] 
        & \ZZ(\Sigma_2\sqcup S\hskip-3pt\times\hskip-3pt S)\hskip5pt\ar[d, equal]\ar[r, "\scriptstyle \ZZ(\id\sqcup S\times \D)"{name = SD, above}] 
        & \hskip5pt\ZZ(\Sigma_3)\ar[d, equal]
        \\
        \ZZ(\Sigma_1)\ar[d, equal]\ar[r, "\ZZ(M)"{name = M3, above}] 
        & \ZZ(\Sigma_2)\ar[d, equal] \ar[r, "\ZZ(M'_{\smallsetminus \S})"{name = MmS1, above}] 
        & \ZZ(\Sigma_2\sqcup S\hskip-3pt\times\hskip-3pt S)\hskip5pt\ar[r, "\scriptstyle \ZZ(\id\sqcup \D\times S)"{name = DS, above}] 
        & \hskip5pt\ZZ(\Sigma_3)\ar[d, equal]
        \\
        \ZZ(\Sigma_1)\ar[d, equal]\ar[r, "\ZZ(M)"{name = M4, above}] 
        & \ZZ(\Sigma_2) \ar[rr, "\ZZ(M'(\S))"{name = MS, above}] 
        && \ZZ(\Sigma_3)\ar[d, equal]
        \\
        \ZZ(\Sigma_1) \ar[rrr, "\ZZ(M'(\S)\circ M)"{name = MSM, above, near start}] 
        &&& \ZZ(\Sigma_3)
        \arrow[from = M, to = 2-3, Rightarrow, "\phi^{-1}", shorten < = 7]
        \arrow[from = 3-3, to = MS, Rightarrow, "\phi"]
        \arrow[from = 4-2, to = MSM, Rightarrow, "\phi"]
        \arrow[from = SD, to = DS, Rightarrow, "\id\otimes Z_k"{near end}, shorten < = 7]
        \arrow[from = MmS1, to = MmS2, equal, shorten > = 7]
        \arrow[from = M1, to = M2, equal, shorten < = 7]
        \arrow[from = M2, to = M3, equal, shorten < = 7]
        \arrow[from = M3, to = M4, equal, shorten < = 7]
    \end{tikzcd}
=
\begin{tikzcd}
    \ZZ(\Sigma_1)\ar[d, equal] \ar[r, "\ZZ(M)"{name = M1, above}] 
        & \ZZ(\Sigma_2)\ar[d, equal]\ar[r, "\ZZ(M')", ""{name = M, above, near start}] 
        & \ZZ(\Sigma_3)\ar[d, equal]
        \\
    \ZZ(\Sigma_1)\ar[d, equal] \ar[r, "\ZZ(M)"{name = M2, above}] 
        & \ZZ(\Sigma_2)\ar[r, "\ZZ(M'(\S))", ""{name = MS, above, near start}] 
        & \ZZ(\Sigma_3)\ar[d, equal]
        \\
        \ZZ(\Sigma_1) \ar[rr, "\ZZ(M'(\S)\circ M)"{name = MSM, above}] 
        && \ZZ(\Sigma_3)
        \arrow[from = 2-2, to = MSM, Rightarrow, "\phi_{M,M'(\S)}"]
        \arrow[from = M1, to = M2, equal, shorten < = 7]
        \arrow[from = M, to = MS, Rightarrow, "\ZZ(W(\S))", shorten  = 7]
\end{tikzcd}
\end{equation*}
Naturality with 2-morphisms in $M$ is similar. 
This proves that $\ZZ$ is indeed a 2-functor. We now have to prove that it is symmetric monoidal. 

For every pair of objects $\Sigma, \Sigma'$, we are given a 1-morphism $$\chi_{\Sigma, \Sigma'}: \ZZ(\Sigma)\otimes \ZZ(\Sigma') \to \ZZ(\Sigma\sqcup\Sigma')$$
and for every pair of 1-morphisms $\Sigma_- \overset{M}{\to}\Sigma_+,\ \Sigma_-'\overset{M'}{\to}\Sigma_+'$, we are given an isomorphism 
$$\chi_{M,M'}: \chi_{\Sigma_+,\Sigma_+'} \circ (\ZZ(M)\otimes \ZZ(M')) \Rightarrow \ZZ(M\sqcup M') \circ \chi_{\Sigma_-,\Sigma_-'}$$
which by assumption is natural with respect to diffeomorphisms. We need to check that it is actually natural with respect to all 2-morphisms and again it is enough to check it for 2-morphisms of the form $W(\S)$. The argument is similar to the one above, except that we have disjoint union of $M$ and $M'$ instead of side composition of $M$ and $M'$.
\end{proof}

\begin{proof}[Proof of Theorem \ref{Thm_ETQFTfromHandle}]
The existence of $\ZZ$ is provided by Proposition \ref{Prop_sm2fun}. It extends $\ZZ^\varepsilon$ and satisfies $\ZZ(H_k)=Z_k$ by construction. Unicity is given by the existence of parametrized Cerf decompositions for 2-morphisms. 

Reciprocally, we have seen that the 2-morphisms $(H_k)_k$ satisfy (a) and (b) in $\Cob_{n+1+1}$, hence so do $(\ZZ(H_k))_k$.
\end{proof}

\section{Classification of extensions}\label{Sec_Classif}
We will see that, given the $k$-handle $Z_k$, there is at most one 2-morphism $Z_{k+1}$ which will satisfy the handle cancellation and $\iota$-invariance. In particular, the extension $\ZZ$ of a categorified TQFT $\ZZ^\varepsilon$ is determined by it value on the 0-handle $Z_0$. This has been noticed and exploited in \cite{ReutterSlides, WalkerSlides}.

The idea of the proof is very similar to the argument in \cite[Prop. 3.4.19]{LurieCob} where it is shown that the $k$-handle is a unit for some adjunction, and the $(k+1)$-handle is the counit. In a bicategory, given the unit of an adjunction, there is only one counit which satisfies the snake relations, hence the $k$-handle determines the $(k+1)$-handle. This argument doesn't exactly work in our setting as the morphism for which these are unit and counit has corners and is not allowed as a 1-morphism in our cobordism bicategory, but the idea remains valid.
\begin{theorem}\label{Thm_Classif}
    Let $\ZZ,\ZZ': \Cob_{n+1+1}\to \CC$ (resp. $\Cob_{n+1+1}^{nc}\to \CC$) be (resp. non-compact) once-extended TQFTs which agree on $\Cob_{n+1+\varepsilon}$ and such that $\ZZ(H_0)=\ZZ'(H_0)$. Then $\ZZ=\ZZ'$.
\end{theorem}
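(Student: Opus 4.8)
The plan is to reduce the statement to an equality of handle data and then to run an induction on the handle index, using the handle cancellations as a substitute for the triangle identities of an adjunction. First I would invoke the uniqueness clause of Theorem \ref{Thm_ETQFTfromHandle}. Since $\ZZ$ and $\ZZ'$ agree on $\Cob_{n+1+\varepsilon}$ they share a common restriction $\ZZ^\varepsilon$, and each is the \emph{unique} extension of $\ZZ^\varepsilon$ determined by its handle data $(\ZZ(H_k))_k$, resp. $(\ZZ'(H_k))_k$. Hence it suffices to prove $\ZZ(H_k)=\ZZ'(H_k)$ for all $k\in\{0,\dots,n+2\}$ (resp. $k\in\{0,\dots,n+1\}$). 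Writing $Z_k:=\ZZ(H_k)$ and $Z_k':=\ZZ'(H_k)$, the reciprocal part of Theorem \ref{Thm_ETQFTfromHandle} guarantees that both families satisfy (a) and (b), and by hypothesis $Z_0=Z_0'$. I will prove $Z_k=Z_k'$ by induction on $k$, the base case being exactly the hypothesis on the $0$-handle.

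For the inductive step I would assume $Z_k=Z_k'$ and deduce $Z_{k+1}=Z_{k+1}'$. The mechanism is the one announced in the introduction and modelled on \cite[Prop. 3.4.19]{LurieCob}: $Z_k$ behaves as the unit and $Z_{k+1}$ as the counit of an adjunction, and a counit is determined by its unit. Concretely, $\iota$-invariance (b) upgrades the handle cancellation (a) to its negative-intersection counterpart (a') — precisely the reduction performed in relation (4) of Proposition \ref{Prop_ZonHoms}, where the case $a(\S')\pitchfork b(\S)=-1$ is brought back to the case $+1$ by reversing an attaching sphere. Thus both $Z_{k+1}$ and $Z_{k+1}'$ satisfy the two cancellations (a) and (a') against the common $Z_k$, and these play the roles of the two triangle identities. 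I would then reproduce the standard computation that a counit is determined by its unit: writing $\eta$ for $Z_k$, $\epsilon$ for $Z_{k+1}$, $\epsilon'$ for $Z_{k+1}'$, and $L,R$ for the relevant $1$-morphisms,
\[
\epsilon \;=\; \epsilon\circ \id_{LR} \;=\; \epsilon\circ (LR\epsilon')\,(L\eta R) \;=\; \epsilon'\circ(\epsilon LR)\,(L\eta R) \;=\; \epsilon'\circ\id_{LR} \;=\; \epsilon',
\]
where the second equality is a triangle identity for $\epsilon'$, the third is the interchange law of the target bicategory $\CC$, and the fourth is a triangle identity for $\epsilon$, all taken up to the coherence isomorphisms of $\CC$ and $\ZZ^\varepsilon$ already inserted in Definitions \ref{Def_SatisfyHdlCancel} and \ref{Def_iotaInvariant}.

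The main obstacle, and the only point demanding genuine care, is that this abstract argument lives in a bicategory whose $1$-morphisms include the pinched handle $\D^k\times\D^{n+2-k}$, which is \emph{not} a $1$-morphism of $\Cob_{n+1+1}$. I would therefore never form the whiskerings $\epsilon L$, $L\eta R$, etc. directly; instead I would realize each of them by choosing explicit framed spheres inside the auxiliary ball $B$ of Definition \ref{Def_TheHdlCancel}, so that the composites $Z(\S_B')\circ Z(\S_B)$ and their negative-intersection variants involving $\overline{\S_B'}$ are honest $2$-morphisms of $\CC$ to which the relation of Definition \ref{Def_SatisfyHdlCancel} and its reversed form (a') apply verbatim. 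The technical heart of the step is then to check that these geometric canceling-pair configurations assemble into exactly the chain of compositions above, and that the resulting identity depends on $Z_{k+1}$ through (a) only, on $Z_{k+1}'$ through (a') only, and otherwise solely on the shared datum $Z_k$ and on diffeomorphism-induced $2$-morphisms coming from $\ZZ^\varepsilon$.

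The non-compact case is identical, with $k$ ranging in $\{0,\dots,n\}$: the cancellation (a) between $Z_k$ and $Z_{k+1}$ is available for exactly these indices, so $Z_1,\dots,Z_{n+1}$ are determined in turn starting from $Z_0$, and no $(n+2)$-handle ever enters the argument. Combining the reduction of the first paragraph with this induction yields $Z_k=Z_k'$ for all admissible $k$, hence $\ZZ=\ZZ'$ by uniqueness.
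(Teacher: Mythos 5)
Your overall strategy is exactly the paper's: reduce to $\ZZ(H_k)=\ZZ'(H_k)$ via the uniqueness clause of Theorem \ref{Thm_ETQFTfromHandle}, induct on the handle index, and run the ``a counit is determined by its unit'' computation, using $\iota$-invariance to make both signs of cancellation available. You also correctly identify the one real difficulty, namely that the whiskerings $\epsilon L$, $L\eta R$ cannot be formed literally because the pinched handle is not a $1$-morphism of $\Cob_{n+1+1}$.

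However, the step you defer as ``the technical heart'' is in fact the entire content of the paper's proof, and the pointer you give for it would not work as stated. The ball $B$ of Definition \ref{Def_TheHdlCancel} contains a single cancelling pair $(\S_B,\S_B')$; to realize the chain $\epsilon\circ(LR\epsilon')(L\eta R)=\epsilon'\circ(\epsilon LR)(L\eta R)$ you need one configuration carrying a $(k-1)$-sphere $\S$ together with \emph{two disjoint} framed $k$-spheres, each meeting $b(\S)$ transversely once, so that attaching either $(k+1)$-handle cancels the $k$-handle and the two $(k+1)$-attachments commute by disjointness. The paper builds this on $M=S^k\times\D^{n+1-k}$ with $\S$ the equatorial $S^{k-1}\times\{0\}$ and $\S_\pm$ the two hemispherical spheres $S^k_\pm\times\{0\}\cup_{S^{k-1}}\D^k\times\{x_\pm\}$ in $M(\S)$; this does not live inside $B$. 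Two further points you gloss over with ``$\epsilon'\circ\id=\epsilon'$'' also require an argument: (i) after cancelling, say, $\S_-$ against $\S$ via the diffeomorphism $\varphi_-:M(\S)(\S_-)\simeq M$, one must check that the image of $\S_+$ under $\varphi_-$ is isotopic to a standard framed $k$-sphere (the paper does this by sliding $\S_+$ along $\S_-$), so that $Z(\S_+)$ really recovers $Z_{k+1}$ up to diffeomorphism-induced $2$-morphisms; and (ii) all the diffeomorphism-induced $2$-morphisms appearing are shared by $\ZZ$ and $\ZZ'$ because both restrict to the same $\ZZ^\varepsilon$. With the explicit configuration on $S^k\times\D^{n+1-k}$ and these two checks supplied, your argument closes and coincides with the paper's.
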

\begin{proof}
    By the uniqueness in Theorem \ref{Thm_ETQFTfromHandle}, it suffices to show that $\ZZ(H_k)=\ZZ'(H_k)$ for every $k\leq n+2$ (resp. $k\leq n+1$). By induction, suppose that $Z_k := \ZZ(H_k)=\ZZ'(H_k)$. Denote $Z_{k+1} := \ZZ(H_k)$ and $Z_{k+1}':=\ZZ'(H_k)$.

We will construct a cobordism with one $k$-handle and two $(k+1)$-handles, for which we may want to use either $Z_{k+1}$ of $Z_{k+1}'$, so that we can cancel either one of them with the $k$-handle, and after cancellation the remaining $(k+1)$-handle is isomorphic to the standard handle. 

Consider $M := S^k\times \D^{n+1-k}$ and the $(k-1)$-sphere $\S: S^{k-1} \overset{-\times\{0\}}{\inj} S^k\times \D^{n+1-k}$, which is canonically framed by crossing a tubular neighborhood $\nu(S^{k-1})$ of $S^{k-1}$ in $S^k$ with $\D^{n+1-k}$ and smoothing the corners of the result. Note that $\nu(S^{k-1})\times\{0\} \simeq S^{k-1}\times[-1,1]$ intersects the boundary $S^{k-1}\times S^{n+1-k}$ of the framed sphere $\S$ along two copies of $S^{k-1}$, $S^{k-1}\times \{x_\pm\}$, for some antipodal $x_\pm \in S^{n+1-k}$, which we may call north and south poles.

The surgered manifold is $$M(\S) = (S^k\times \D^{n+1-k} \smallsetminus \im(\S))\underset{S^{k-1}\times S^{n+1-k}}{\cup} \D^k\times S^{n+1-k}$$ 
Denote $S^k_+$ and $S^k_-$ the two connected components of $S^k \smallsetminus \nu(S^{k-1})$. Then 
$$\S_\pm = S^k_\pm\times \{0\} \underset{S^{k-1}}{\cup}\D^k\times \{x_\pm\} \subseteq M(\S)$$
are disjoint $k$-spheres, which each intersect the belt sphere of $\S$ exactly once, transversely.
We orient them by taking the orientation induced by the $S^k_\pm$ part and they are canonically framed as above. 

As they are disjoint, we can think of $\S_\pm$ as a framed sphere in $M(\S)(\S_\mp)$. As $a(\S_\mp)$ intersect $b(\S)$ transversely once, we have a diffeomorphism $\varphi_\pm: M(\S)(\S_\mp) \simeq M$ supported in a neighborhood of $b(\S)\cup b(\S_\mp)$. The sphere $\S_\pm$ transported under this diffeomorphism is isotopic to the identity, as can be seen by sliding $\S_\pm$ along $\S_\mp$.

Let us denote $Z(\S_\pm): \ZZ(M(\S))\to \ZZ(M(\S)(\S_\pm))$, resp. $Z'(\S_\pm): \ZZ(M(\S))\to \ZZ(M(\S)(\S_\pm))$, the 2-morphism defined in Definition \ref{Def_Z(S)} using $Z_{k+1}$, resp $Z_{k+1}'$. They are respectively equal to $\ZZ(W(\S_\pm))$ and $\ZZ'(W(\S_\pm))$. Note that the 2-morphism $Z(\S)$ is equal to both $\ZZ(W(\S))$ and $\ZZ'(W(\S))$ by assumption, and similarly $\ZZ(W_\varphi)=\ZZ'(W_\varphi)$ below.
We have
\begin{multline*}
    Z(\S_+)\circ \ZZ(W_{\varphi_-}) 
    = Z(\S_+) \circ Z'(\S_-)\circ Z(\S) \overset{(2)}{=} Z'(\S_-) \circ Z(\S_+)\circ Z(\S)  = Z'(\S_-)\circ \ZZ(W_{\varphi_+})
\end{multline*}
where the first equality follows from the fact that $Z_k, Z_{k+1}'$ cancel, and the third from the fact that $Z_k, Z_{k+1}$ cancel. The second one follows from relation (2), i.e. commutativity of disjoint handles. 

As $Z_{k+1}'$ can be obtained from $Z'(\S_-):M(\S)(\S_+)\to M(\S)(\S_+)(\S_-)$ by composing with isomorphisms induced by diffeomorphisms, we obtain a formula to express $Z_{k+1}'$ in terms of $Z_{k+1}$ and diffeomorphisms. This formula also holds to express $Z_{k+1}$ in terms of $Z_{k+1}$ and diffeomorphisms by using $Z_{k+1}$ instead of $Z'_{k+1}$ in the equation above. Hence, $Z_{k+1} = Z'_{k+1}$.
\end{proof}
\begin{remark}
Suppose that we are given a categorified TQFT $\ZZ^\varepsilon$ and 2-morphisms $(Z_k)_k$ which satisfy 
\begin{itemize}
    \item[(a)] Each pair $Z_k,\ Z_{k+1}$ satisfies the handle cancellation with $a(\S')\pitchfork b(\S)=1$ as in Definition \ref{Def_TheHdlCancel}, and 
    \item[(a')] Each pair $Z_k,\ Z_{k+1}$ satisfies the handle cancellation with $a(\S')\pitchfork b(\S)=-1$.
\end{itemize} 
but not necessarily (b)  $\iota$-invariance. 

Consider the 2-morphism $(\overline{Z_k})_k$, where $\overline{Z_k}$ is the LHS of Definition \ref{Def_iotaInvariant} for $k \neq 0, n+2$, i.e. $\overline{Z_k} = Z(\iota: S^{k-1}\times \D^{n+2-k} \to S^{k-1}\times \D^{n+2-k})$ is the attachment associated to the reversed sphere, and $\overline{Z_k}=Z_k$ for $k = 0, n+2$. By assumption $Z_k, \overline{Z_{k+1}}$ cancel, for any sign of the algebraic intersection.

The construction above shows inductively that $Z_k = \overline{Z_k}$, i.e. we have $\iota$-invariance. 

We have already seen that $\iota$-invariance implies cancellation with algebraic intersection $-1$, hence (a) and (a') is equivalent to (a) and (b).
\end{remark}

\small
\bibliography{mybib.bib}

@article{ReutterSlides ,
author = {David Reutter},
    title =  "{From non-unital skein theory to modified traces and non-semisimple
topological field theories}",
    year =  "2020",
    note =  "Talk in Bonn \url{https://drive.google.com/file/d/1IslyZdNdYoAaqe_GRZ60uLEN9IOiAiEM/view}.",
}

@article{WalkerSlides ,
author = {Kevin Walker},
    title =  "{Going from n + $\varepsilon$ to n + 1 in non-semisimple oriented TQFTs}",
    year =  "2022",
    note =  "Talk at the UQSL seminar \url{https://canyon23.net/math/talks/np1.pdf}.",
}

@article{ WalkerNotes,
author = {Kevin Walker},
    title =  "{Topological Quantum Field Theories}",
year = {2006},
    note =  "available at \url{http://canyon23.net/math/tc.pdf}"
}

@article{ WalkerOnWitten,
author = {Kevin Walker},
    title =  "{On Witten's 3-manifold invariants}",
year = {1991},
    note =  "available at \url{https://canyon23.net/math/1991TQFTNotes.pdf}"
}

@incollection {LurieCob,
    AUTHOR = {Lurie, Jacob},
     TITLE = {On the classification of topological field theories},
 BOOKTITLE = {Current developments in mathematics, 2008},
     PAGES = {129--280},
 PUBLISHER = {Int. Press, Somerville, MA},
      YEAR = {2009},
   MRCLASS = {57R56 (18D10 18G30 57R15 57R75)},
  MRNUMBER = {2555928},
MRREVIEWER = {Julia Bergner},
note = {\url{https://www.math.ias.edu/~lurie/papers/cobordism.pdf}, arXiv:0905.0465}
}

@phdthesis{Scheimbauer,
    title    = {Factorization Homology as a Fully Extended Topological Field Theory},
    school   = {ETH Z\"URICH},
    author   = {Claudia Scheimbauer},
    year     = {2014},
    note = {\url{http://www.scheimbauer.at/ScheimbauerThesis.pdf}}
}

@incollection {StolzTeichner,
    AUTHOR = {Stolz, Stephan and Teichner, Peter},
     TITLE = {Supersymmetric field theories and generalized cohomology},
 BOOKTITLE = {Mathematical foundations of quantum field theory and
              perturbative string theory},
    SERIES = {Proc. Sympos. Pure Math.},
    VOLUME = {83},
     PAGES = {279--340},
 PUBLISHER = {Amer. Math. Soc., Providence, RI},
      YEAR = {2011},
   MRCLASS = {55N20 (11F23 18D10 55N34 57R56 81T60)},
  MRNUMBER = {2742432},
MRREVIEWER = {Theo Johnson-Freyd},
       DOI = {10.1090/pspum/083/2742432},
       note = {arXiv:1108.0189},
}

@book {TuraevBook,
    AUTHOR = {Turaev, V. G.},
     TITLE = {Quantum invariants of knots and 3-manifolds},
    SERIES = {De Gruyter Studies in Mathematics},
    VOLUME = {18},
 PUBLISHER = {Walter de Gruyter \& Co., Berlin},
      YEAR = {1994},
     PAGES = {x+588},
      ISBN = {3-11-013704-6},
   MRCLASS = {57M25 (16W30 17B37 57N10 81R50)},
  MRNUMBER = {1292673},
MRREVIEWER = {Louis H. Kauffman},
}

@article {WittenJonesPol,
    AUTHOR = {Witten, Edward},
     TITLE = {Quantum field theory and the {J}ones polynomial},
   JOURNAL = {Comm. Math. Phys.},
  FJOURNAL = {Communications in Mathematical Physics},
    VOLUME = {121},
      YEAR = {1989},
    NUMBER = {3},
     PAGES = {351--399},
      ISSN = {0010-3616},
   MRCLASS = {57M25 (17B67 57N10 58D15 58D30 81E40)},
  MRNUMBER = {990772},
MRREVIEWER = {Daniel S. Freed},
       URL = {http://projecteuclid.org/euclid.cmp/1104178138},
}

@article{ CGHP,
author = {Francesco Costantino and Nathan Geer and Benjamin Ha{\"i}oun and Bertrand {Patureau-Mirand}},
    title =  "{Skein (3+1)-TQFTs from non-semisimple ribbon categories}",
year = {2023},
    note =  "arXiv:2306.03225"
}

@article {Juhasz,
    AUTHOR = {Juh\'{a}sz, Andr\'{a}s},
     TITLE = {Defining and classifying {TQFT}s via surgery},
   JOURNAL = {Quantum Topol.},
  FJOURNAL = {Quantum Topology},
    VOLUME = {9},
      YEAR = {2018},
    NUMBER = {2},
     PAGES = {229--321},
      ISSN = {1663-487X,1664-073X},
   MRCLASS = {57R56 (57M27 57R65)},
  MRNUMBER = {3812798},
MRREVIEWER = {Jorge\ Andres\ Devoto},
       DOI = {10.4171/QT/108},
       URL = {https://doi.org/10.4171/QT/108},
       note={arXiv:1408.0668},
}

@article {KirbyLinksS3,
    AUTHOR = {Kirby, Robion},
     TITLE = {A calculus for framed links in {$S\sp{3}$}},
   JOURNAL = {Invent. Math.},
  FJOURNAL = {Inventiones Mathematicae},
    VOLUME = {45},
      YEAR = {1978},
    NUMBER = {1},
     PAGES = {35--56},
      ISSN = {0020-9910,1432-1297},
   MRCLASS = {57A10 (55A25)},
  MRNUMBER = {467753},
MRREVIEWER = {Roger\ Fenn},
       DOI = {10.1007/BF01406222},
       URL = {https://doi.org/10.1007/BF01406222},
}

@article{CGPVnc2plus1 ,
author = {Francesco Costantino and Nathan Geer and Bertrand {Patureau-Mirand} and Alexis Virelizier},
    title =  "{Non compact (2+1)-TQFTs from non-semisimple spherical categories}",
year = {2023},
    note =  "arXiv:2302.04509"
}

@article{Filippos ,
author = {Filippos Sytilidis},
    title =  "{On presentations of bordism bicategories}",
    year =  "2024",
    note =  "PhD thesis, available at \url{https://sites.google.com/view/fsytilidis/phd-thesis}"
}

@article {ReshetikhinTuraev,
    AUTHOR = {Reshetikhin, N. and Turaev, V. G.},
     TITLE = {Invariants of {$3$}-manifolds via link polynomials and quantum
              groups},
   JOURNAL = {Invent. Math.},
  FJOURNAL = {Inventiones Mathematicae},
    VOLUME = {103},
      YEAR = {1991},
    NUMBER = {3},
     PAGES = {547--597},
      ISSN = {0020-9910,1432-1297},
   MRCLASS = {57N10 (17B37 57M25 81R50)},
  MRNUMBER = {1091619},
MRREVIEWER = {Louis\ H.\ Kauffman},
       DOI = {10.1007/BF01239527},
       URL = {https://doi.org/10.1007/BF01239527},
}

@incollection {CraneYetter,
    AUTHOR = {Crane, Louis and Yetter, David},
     TITLE = {A categorical construction of {$4$}D topological quantum field
              theories},
 BOOKTITLE = {Quantum topology},
    SERIES = {Ser. Knots Everything},
    VOLUME = {3},
     PAGES = {120--130},
 PUBLISHER = {World Sci. Publ., River Edge, NJ},
      YEAR = {1993},
      ISBN = {981-02-1544-4},
   MRCLASS = {57N13 (18D05 57Q99 57R99 81T40)},
  MRNUMBER = {1273569},
MRREVIEWER = {Louis\ Funar},
       DOI = {10.1142/9789812796387\_0005},
       URL = {https://doi.org/10.1142/9789812796387_0005},
note={arXiv:hep-th/9301062}
}

@article {TuraevViroStateSum,
    AUTHOR = {Turaev, V. G. and Viro, O. Ya.},
     TITLE = {State sum invariants of {$3$}-manifolds and quantum
              {$6j$}-symbols},
   JOURNAL = {Topology},
  FJOURNAL = {Topology. An International Journal of Mathematics},
    VOLUME = {31},
      YEAR = {1992},
    NUMBER = {4},
     PAGES = {865--902},
      ISSN = {0040-9383},
   MRCLASS = {57N10 (16W30 57M25)},
  MRNUMBER = {1191386},
MRREVIEWER = {Louis\ H.\ Kauffman},
       DOI = {10.1016/0040-9383(92)90015-A},
       URL = {https://doi.org/10.1016/0040-9383(92)90015-A},
}

@article {SPPhD,
    AUTHOR = {{Schommer-Pries}, Christopher J.},     TITLE = {The classification of two-dimensional extended topological field theories},
      YEAR = {2009},

       note = {PhD thesis, UC Berkeley, arXiv:1112.1000},
}

@article{GWW13 ,
author = {D. Gay and K. Wehrheim and C. Woodward},
    title =  "{Connected Cerf theory}",
    year =  "2013",
    note =  "available at \url{https://math.berkeley.edu/~katrin/papers/cerf.pdf}",
}

@book {Milnor_hcobordism,
    AUTHOR = {Milnor, John},
     TITLE = {Lectures on the {$h$}-cobordism theorem},
      NOTE = {Notes by L. Siebenmann and J. Sondow},
 PUBLISHER = {Princeton University Press, Princeton, NJ},
      YEAR = {1965},
     PAGES = {v+116},
   MRCLASS = {57.10},
  MRNUMBER = {190942},
MRREVIEWER = {P.\ E.\ Conner},
}

@book {MunkresDiffTop,
    AUTHOR = {Munkres, James R.},
     TITLE = {Elementary differential topology},
    SERIES = {Annals of Mathematics Studies},
    VOLUME = {No. 54},
   EDITION = {Revised},
      NOTE = {Lectures given at Massachusetts Institute of Technology, Fall,
              1961},
 PUBLISHER = {Princeton University Press, Princeton, NJ},
      YEAR = {1966},
     PAGES = {xi+112},
   MRCLASS = {57.00},
  MRNUMBER = {198479},
}

@article {Cerf,
    AUTHOR = {Cerf, Jean},
     TITLE = {La stratification naturelle des espaces de fonctions
              diff\'erentiables r\'eelles et le th\'eor\`eme de la
              pseudo-isotopie},
   JOURNAL = {Inst. Hautes \'Etudes Sci. Publ. Math.},
  FJOURNAL = {Institut des Hautes \'Etudes Scientifiques. Publications
              Math\'ematiques},
    NUMBER = {39},
      YEAR = {1970},
     PAGES = {5--173},
      ISSN = {0073-8301,1618-1913},
   MRCLASS = {58D99 (57D35)},
  MRNUMBER = {292089},
MRREVIEWER = {John\ Willard\ Milnor},
       URL = {http://www.numdam.org/item?id=PMIHES_1970__39__5_0},
}
\bibliographystyle{alpha}
\end{document}